\pgfplotsset{compat=newest}
\newlength\figurewidth
\newsavebox{\gwbox}\newlength{\gwidth}
\newcommand{\defdelim}[1]{\ifx\relax#1\relax\def\lsize{\left}\def\rsize{\right}\else\def\lsize{#1}\def\rsize{#1}\fi}
\newcommand{\floor}[2][]{\defdelim{#1}\lsize\lfloor#2\rsize\rfloor}
\newcommand{\ceil}[2][]{\defdelim{#1}\lsize\lceil#2\rsize\rceil}
\def\<#1>{\left\langle{#1}\right\rangle}
\theoremstyle{plain}
\newtheorem{thm}{Theorem}
\newtheorem{lemma}{Lemma}
\newtheorem{cor}{Corollary}
\theoremstyle{definition}
\theoremstyle{remark}
\newtheorem*{rem}{Remark}
\renewcommand{\leq}{\leqslant}
\renewcommand{\geq}{\geqslant}
\renewcommand{\le}{\leqslant}
\renewcommand{\omega}{\varpi}
\DeclareMathOperator{\diag}{\ensuremath{\mathrm{diag}}}
\DeclareMathOperator{\sign}{\ensuremath{\mathrm{sgn}}}
\DeclareMathOperator{\erf}{\ensuremath{\mathrm{erf}}}
\DeclareMathOperator{\even}{\ensuremath{\mathrm{even}}}
\DeclareMathOperator{\odd}{\ensuremath{\mathrm{odd}}}
\newcommand{\chUE}{\ensuremath{\mathrm{chUE}}}
\newcommand{\UE}{\ensuremath{\mathrm{UE}}}
\renewcommand{\OE}{\ensuremath{\mathrm{OE}}}
\newcommand{\R}{\ensuremath{\mathbb{R}}}
\title[Singular Values and Evenness Symmetry in Random Matrix Theory]{Singular Values and Evenness Symmetry in Random Matrix Theory}
\author{Folkmar Bornemann}
\address[Folkmar Bornemann]{Zentrum Mathematik -- M3, 
  Technische Universit\"at M\"unchen, Germany}
\email{bornemann@tum.de}
\author{Peter J. Forrester}
\address[Peter J. Forrester]{Department of Mathematics and Statistics, University of Melbourne, Australia; ARC Centre of Excellence for Mathematical \& Statistical Frontiers}
\email{p.forrester@ms.unimelb.edu.au}
\keywords{random matrices, evenness symmetry, singular values, gap probabilities}
\begin{document}
\begin{abstract} 
Complex Hermitian random matrices with a unitary symmetry can be distinguished by a weight function. When this is even,
it is a known result that the distribution of the singular values can be decomposed as the superposition of two independent eigenvalue sequences
distributed according to particular matrix ensembles with chiral unitary symmetry. We give decompositions of the distribution of singular values,
and the decimation of the singular values --- whereby only even, or odd, labels are observed --- for real symmetric random matrices
with an orthogonal symmetry, and even weight. This requires further specifying the functional form of the weight to one
of three types --- Gauss, symmetric Jacobi or Cauchy. Inter-relations between gap probabilities with orthogonal and unitary symmetry follow as
a corollary. The Gauss case has appeared in a recent work of Bornemann and La Croix. The Cauchy case, when appropriately specialised and upon
stereographic projection, gives decompositions for the analogue of the singular values for the circular unitary and circular orthogonal ensembles.
\end{abstract}

\maketitle


\section{Introduction}\label{section:introduction}

The ensembles of real symmetric random matrices $\OE_n(w_1)$ possessing an orthogonal symmetry, and complex Hermitian random matrices
$\UE_n(w_2)$ possessing a unitary symmetry, are specified by the eigenvalue densities
\begin{equation}\label{eqn:goedensity}
p_\beta(x_1,\ldots,x_n) = c_{n,\beta} \prod_{k=1}^n w_\beta(x_k)\cdot |\Delta(x_1,\ldots,x_n)|^\beta\qquad (\beta=1,2)
\end{equation}
with some normalization constant $c_{n,\beta}$, each $x_k$ restricted to the interval of support of $w_\beta(x_k)$, and the Vandermonde determinant\footnote{Note that $\Delta(\xi_1,\ldots,\xi_n)\geq0$ if the arguments are increasingly ordered, $\xi_1\leq\cdots\leq \xi_n$.}
\[
\Delta(\xi_1,\ldots, \xi_n) = \det\begin{pmatrix}
1 & 1 & \cdots & 1 \\
\xi_1 & \xi_2 & \cdots & \xi_n\\
\vdots & \vdots & & \vdots\\
\xi_1^{n-1} & \xi_2^{n-1}& \cdots & \xi_n^{n-1} 
\end{pmatrix} = \prod_{k>j} (\xi_k-\xi_j).
\]
Further, relating to a chiral unitary symmetry, there is the matrix ensemble $\chUE(w_2)$ with {\em positive} eigenvalues
distributed according to the density, see \cite[p.~717]{EvenSymm},
\begin{equation}\label{eqn:chiraldensity}
p_{\text{ch}}(x_1,\ldots,x_n) = c_{n}^\text{ch} \prod_{k=1}^n w_2(x_k)\cdot \Delta(x_1^2,\ldots,x_n^2)^2.
\end{equation}
As in the theory of orthogonal polynomials, the $w_\beta(x)$ are referred to as weights. In fact 
the ensembles are  often referred to by the name for the weights  used in the theory of orthogonal polynomials.
For example, OE${}_N(e^{-x^2/2})$ is referred to as the Gaussian orthogonal ensemble.

In this paper, as a unifying framework for examining eigenvalue properties under evenness symmetry, 
introduced into random matrix theory in the works \cite{Ra03,EvenSymm} and further explored in the Gaussian case
in the recent works \cite{ELC14, BLC},
we study the structure of the singular values of ensembles $\OE_n(w_1)$ 
 with \emph{even} weights $w_1$ supported
on $(-\omega,\omega)$ as given in Table~\ref{tab:admissible1}. 
\begin{table}[htdp]
\caption{admissible pairs of symmetric weights supported on $(-\omega,\omega)$; $a>-1$}
{\begin{center}
\begin{tabular}{cccc}
case & $w_1(x)$ & $w_2(x)$ & $\omega$\\*[1mm]\hline
Gauss$\phantom{\Big|}$  & $e^{-x^2/2}$& $e^{-x^2}$ & $\infty$\\
Jacobi$\phantom{\Big|}$ & $(1-x^2)^a$  & $(1-x^2)^{2a+1}$ & $1$\\
Cauchy$\phantom{\Big|}$ & $(1+x^2)^{-(n+a+1)/2}$ & $(1+x^2)^{-(n+a)}$ & $\infty$\\*[1mm]\hline
\end{tabular}
\end{center}}
\label{tab:admissible1}
\end{table}%
The ensemble of singular values will be briefly denoted
by $|\OE_n(w_1)|$, in keeping with the relationship between the eigenvalues and singular values --- since the
ensembles are Hermitian, the singular values are the absolute value of the eigenvalues.
Although defined according to the probability density function (\ref{eqn:goedensity}), we remark that each ensemble implied by
Table \ref{tab:admissible1} can be realised in terms of matrix ensembles defined by a distribution on the elements (see e.g.~\cite[Ch.~1--3]{Fo10}).

Central to our discussion is  the operation of {\em decimation}, which if applied to $|\OE_n(w_1)|$ results in the two
ensembles
 \[
\even |\OE_n(w_1)| \quad\;\text{and}\quad\; \odd |\OE_n(w_1)|,
\]
where we define the even-location decimated ensemble $\even |\OE_n(w_1)|$
by taking the 2nd largest, 4th largest etc.  singular value, and
similarly for $\odd |\OE_n(w_1)|$. The results will often depend on the parity~$\mu$ of
the underlying order $n$ and we will, throughout this paper, write
\begin{subequations}\label{eqn:dim}
\begin{equation}
  n = 2m + \mu \quad (\mu=0,1),\qquad \hat{m}=m+\mu,
\end{equation}
that is,  
\begin{equation}
  m = \floor{n/2}, \qquad \hat{m} = \ceil{n/2}, 
  \qquad \mu = \ceil{n/2} - \floor{n/2}.
\end{equation}
\end{subequations}
Then, generalizing the corresponding result of Bornemann and La Croix \cite[Thm.~1]{BLC} for Gaussian ensembles,
the following structure holds.

\begin{thm}\label{thm:main1} Let $w_\beta$ ($\beta=1,2$) be the weight pairs of the Gauss, symmetric Jacobi or Cauchy case as given in
Table~\ref{tab:admissible1}. Denoting equality of the joint
  distribution of two ensembles by $\overset{\rm d}{=}$, there holds
\begin{equation}\label{eqn:even_aGUE}
\even |\OE_n(w_1)| \,\overset{\rm d}{=}\, \chUE_m(x^{2\mu} w_2)\qquad (n=2m+\mu).
\end{equation}
\end{thm}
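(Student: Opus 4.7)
The plan is to marginalize the joint density of the sorted singular values of $\OE_n(w_1)$ over the odd-labeled coordinates, and then to identify the result with the density of $\chUE_m(x^{2\mu}w_2)$.

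\emph{First}, starting from \eqref{eqn:goedensity} with $\beta=1$, I would pass to the joint density of the ordered singular values $s_1>s_2>\cdots>s_n>0$. Because $w_1$ is even, the $2^n$ sign configurations of the eigenvalues all carry the same weight factor $\prod_k w_1(s_k)$, and the density takes the form
\[
  C_n\prod_{k=1}^n w_1(s_k)\cdot\Psi_n(s_1,\ldots,s_n),\qquad
  \Psi_n \coloneqq \sum_{\epsilon\in\{\pm1\}^n}\prod_{i<j}\abs{\epsilon_i s_i-\epsilon_j s_j}.
\]
The structural step is to evaluate $\Psi_n$ on the sorted sector in a form compatible with the natural interlacing $s_{2k-2}>s_{2k-1}>s_{2k}$, with boundary conventions $s_0\coloneqq\omega$ and, when $\mu=1$, $s_{n+1}\coloneqq 0$. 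A direct check gives $\Psi_2=4s_1$ and $\Psi_3=8s_2(s_1^2-s_3^2)$; in general, the factorization should let each odd-labeled variable $s_{2k-1}$ enter only through low-degree polynomial factors whose coefficients depend on $s_{2k-2}$ and $s_{2k}$.

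\emph{Second}, using this factorization, I would carry out the one-variable integrals $\int_{s_{2k}}^{s_{2k-2}}w_1(s_{2k-1})\cdot(\text{polynomial in }s_{2k-1})\,ds_{2k-1}$ for each odd label. The three weight pairs of Table~\ref{tab:admissible1} are singled out by the antiderivative identity $x\,w_1(x)=F'(x)$ with $F(x)=-e^{-x^2/2}$ (Gauss), $F(x)=-\tfrac{1}{2(a+1)}(1-x^2)^{a+1}$ (Jacobi), and $F(x)=-\tfrac{1}{n+a-1}(1+x^2)^{-(n+a-1)/2}$ (Cauchy). In each case one verifies directly that $w_1(x)\,F(x)$ is a scalar multiple of $w_2(x)$. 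Consequently, repeated integration by parts against the polynomial factors produces boundary values at the interlacing endpoints that telescope into the product $\prod_k w_2(s_{2k})$, while when $\mu=1$ the contributions from the lower endpoint at $0$ combine to give the expected factor $\prod_k s_{2k}^{2\mu}$.

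\emph{Third}, assembling the surviving factors, the marginal density of $(s_2,s_4,\ldots,s_{2m})$ becomes proportional to $\prod_k (s_{2k})^{2\mu}w_2(s_{2k})\cdot\prod_{k<j}(s_{2j}^2-s_{2k}^2)^2$, which is exactly \eqref{eqn:chiraldensity} applied to the weight $x^{2\mu}w_2(x)$. The principal obstacle I anticipate is the first step --- establishing the closed-form factorization of $\Psi_n$ on the sorted sector that aligns with the odd/even interlacing; this combinatorial identity for the signed-Vandermonde sum is the heart of the argument, being leveraged in \cite{BLC} for the Gaussian case, while the uniform extension to Jacobi and Cauchy then rests on the three antiderivative identities above, which pinpoint precisely the three weight families in Table~\ref{tab:admissible1}.
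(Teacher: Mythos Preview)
Your broad architecture---factorize the signed-Vandermonde sum $\Psi_n$, then integrate out the odd-location singular values---is exactly the paper's route. The gap is in how you describe and exploit the factorization.

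The closed form of $\Psi_n$ (equivalently the paper's $D$, established via the Bornemann--La~Croix identity) is \emph{not} a nearest-neighbor product in the odd variables. In the paper's $(s,t)$ coordinates the density splits as $g_\mu(s)\cdot g_{1-\mu}(t)$ with
\[
g_{1-\mu}(t_1,\ldots,t_{\hat m})=\prod_{k=1}^{\hat m} t_k^{\,1-\mu}w_1(t_k)\cdot \Delta(t_{\hat m}^2,\ldots,t_1^2),
\]
so the odd-location variables are coupled through a \emph{full} Vandermonde in their squares. Your expectation that ``each odd-labeled variable $s_{2k-1}$ enter only through low-degree polynomial factors whose coefficients depend on $s_{2k-2}$ and $s_{2k}$'' is false already at $n=5$, where $\Psi_5$ contains the factor $(s_1^2-s_3^2)(s_1^2-s_5^2)(s_3^2-s_5^2)$. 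Consequently the marginalization is not a product of independent one-dimensional integrals $\int_{s_{2k}}^{s_{2k-2}}(\cdots)\,ds_{2k-1}$, and no amount of integration by parts with the single identity $x\,w_1(x)=F'(x)$ will make it telescope.

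What actually carries the integration is a determinantal device: one writes $g_{1-\mu}(t)$ as a determinant with columns $w_1(t_j)\pi_\nu(t_j)$, pulls each $\int_{s_j}^{s_{j-1}}$ into its column, and then invokes the full three-term antiderivative recurrence
\[
\int^x \xi^{k}w_1(\xi)\,d\xi=-\alpha_k x^{k-1}\phi(x)w_1(x)+\beta_k\int^x \xi^{k-2}w_1(\xi)\,d\xi
\]
for \emph{all} relevant $k$, not just $k=1$. This recurrence acts as a lower-triangular column transformation, and after handling the boundary columns one obtains a determinant that is precisely $\tilde g_\mu(s)$, whence $g_\mu(s)\tilde g_\mu(s)\propto\prod s_k^{2\mu}w_2(s_k)\cdot\Delta(s^2)^2$. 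It is this multi-$k$ recurrence---not the single primitive $F$---that pins down the Gauss/Jacobi/Cauchy families; your second step needs to be replaced by this determinantal integration (the paper's Lemma~\ref{lem:1} and Corollary~\ref{cor:2}, or equivalently, in the Jacobi case, the Dixon--Anderson integral).
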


If we recall the
superposition representation \cite[Eq.~(2.6)]{EvenSymm}
\begin{equation}\label{1.17}
|\UE_n(w_2)| \,\overset{\rm d}{=}\,  \chUE_{\hat{m}}(w_2)\,\cup\; \chUE_{m}(x^2 w_2),
\end{equation}
of the singular values of the corresponding
unitary ensemble $\UE_n(w_2)$,
with both ensembles on the right drawn independently,
Theorem~\ref{thm:main1} immediately implies the following remarkable
relation between the singular values of $\OE(w_1)$ and $\UE(w_2)$:
\begin{cor}\label{cor:main1} Let $w_\beta$ ($\beta=1,2$) be the weight pairs of the Gauss, symmetric Jacobi or Cauchy case as given in
Table~\ref{tab:admissible1}. Then, with the ensembles on the right drawn independently, there holds
\begin{equation}\label{eqn:super}
|\UE_n(w_2)|  \,\overset{\rm d}{=}\,  \even |\OE_n(w_1)| \,\cup\, \even |\OE_{n+1}(w_1)|.
\end{equation}
\end{cor}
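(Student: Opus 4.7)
The plan is to apply Theorem~\ref{thm:main1} separately to $\even|\OE_n(w_1)|$ and $\even|\OE_{n+1}(w_1)|$ and then recognize the resulting pair of independent chiral unitary ensembles as exactly the decomposition appearing on the right-hand side of the superposition~\eqref{1.17}. The key observation is that $n$ and $n+1$ have opposite parities, so one of the two applications of Theorem~\ref{thm:main1} produces a $\chUE$ with weight $w_2$ while the other produces a $\chUE$ with weight $x^2 w_2$, matching the two factors in~\eqref{1.17}.

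Concretely, I would first write $n = 2m+\mu$ with $\mu \in \{0,1\}$, so that $\hat m = m+\mu$. Theorem~\ref{thm:main1} gives
\[
\even|\OE_n(w_1)| \,\overset{\rm d}{=}\, \chUE_m(x^{2\mu} w_2).
\]
For the order $n+1$, rewrite $n+1 = 2m' + (1-\mu)$ with $m' = m+\mu$, so that Theorem~\ref{thm:main1} yields
\[
\even|\OE_{n+1}(w_1)| \,\overset{\rm d}{=}\, \chUE_{m+\mu}\!\left(x^{2(1-\mu)} w_2\right).
\]
A short two-case check on the parity $\mu$ then confirms that the union $\even|\OE_n(w_1)| \cup \even|\OE_{n+1}(w_1)|$ is distributed as $\chUE_{\hat m}(w_2) \cup \chUE_m(x^2 w_2)$: for $\mu = 0$ one has $\hat m = m$ and the union reads $\chUE_m(w_2) \cup \chUE_m(x^2 w_2)$, while for $\mu = 1$ one has $\hat m = m+1$ and it reads $\chUE_m(x^2 w_2) \cup \chUE_{m+1}(w_2)$. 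By~\eqref{1.17} this is exactly the joint distribution of $|\UE_n(w_2)|$.

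The only point requiring care is independence: on the right of~\eqref{eqn:super} the two $\OE$ ensembles are independent by hypothesis, and through the ensemble-level identities furnished by Theorem~\ref{thm:main1} this propagates to independence of the two $\chUE$ factors, matching the independence asserted in~\eqref{1.17}. There is no substantive obstacle here — the corollary really is an immediate consequence of Theorem~\ref{thm:main1} and~\eqref{1.17}, and the whole content of the proof is the parity bookkeeping above.
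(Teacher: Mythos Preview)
Your proposal is correct and follows exactly the approach taken in the paper, which simply notes that Corollary~\ref{cor:main1} is an immediate consequence of Theorem~\ref{thm:main1} combined with the superposition~\eqref{1.17}. You have merely made explicit the parity bookkeeping that the paper leaves to the reader.
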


\noindent
The superposition \eqref{eqn:super} bears a striking
similarity with a corresponding superposition result for the eigenvalue
distributions, see \cite[pp.~185--186]{FR} or \cite[\S6.6]{Fo10}, namely
\[
\UE_n(w_2) \,\overset{\rm d}{=}\,  \even\left( \OE_n(w_1) \cup\, \OE_{n+1}(w_1)\right).
\]

We proceed as follows: first, in Section~\ref{section:revision} we give an
overview of superposition and decimation results in random matrix theory known from previous
studies, so as to properly set the scene for the present study and also as an opportunity 
to introduce the circular ensembles. In Section~\ref{section:jointdensity} 
a  factorized expression for
the joint density of the singular values is obtained, with the proof of Theorem~\ref{thm:main1} in Section~\ref{section:evensingularvalues}
 following from this by integrating out the
odd-location singular values. The success of this task is based on the notion of \emph{admissible} symmetric weights, which we
introduce in Section~\ref{section:admissible}. There, Theorem~\ref{thm:classification} will give a complete
classification of all the admissible weights, namely, they are exactly the Gauss, symmetric Jacobi and Cauchy weights
(this is not to say that Theorem~\ref{thm:main1} would not hold for other ensembles, but to point out that the method of proof is limited
to those cases). In the first subsection of
Section~\ref{section:gapprobabilities}, some inter-relationships between gap probabilities are deduced from Theorem~\ref{thm:main1}.
For $n$ even, these have been obtained in the earlier study \cite{EvenSymm} without knowledge of Theorem~\ref{thm:main1}. We proceed to provide the necessary
working to show that this is still possible for $n$ odd. The relative complexity serves to further highlight the advantages of a viewpoint based on singular values.
We conclude in Section~\ref{section:circular} by presenting a number of new inter-relations between the spectra of circular ensembles, which follow upon the
use a stereographic projection of the appropriate Cauchy weights to specify
circular ensemble analogues of Theorem~\ref{thm:main1} and its various corollaries.


\section{inter-relations known from previous studies}\label{section:revision}

\subsection{Circular ensembles}

Central to our theme is the operation of superposition,
whereby eigenvalue sequences from two independent ensembles with orthogonal symmetry are superimposed, 
and that of decimation, meaning in the present context that only
those 
eigenvalues with a particular parity in the ordering are observed. The best known example
of these operations involves not eigenvalues on the real line as in (\ref{eqn:goedensity}), but rather matrix
ensembles with all eigenvalues on the unit circle in the complex plane. In fact such ensembles naturally
follow from (\ref{eqn:goedensity}) with the Cauchy weight
\begin{equation}\label{W1}
w_\beta(x) = {1 \over (1 + x^2)^{\beta (n - 1)/2 + 1}}.
\end{equation}
Thus, after making, for each eigenvalue, the change of variables 
\begin{equation}\label{W1a}
e^{i \theta} = {1 + i x \over 1 - i x},\qquad x = \tan(\theta/2),
\end{equation}
corresponding to a stereographic mapping from the real line to the unit circle, one obtains the eigenvalue PDF on
the unit circle
\begin{equation}\label{1A}
\propto |\Delta(e^{i\theta_1},\ldots,e^{i\theta_n})|^\beta,
\end{equation}
referred to, in the case $\beta=1$, as the circular orthogonal ensemble ${\rm COE}_n$ and, in the case $\beta = 2$, 
as the circular unitary ensemble ${\rm CUE}_n$; see e.g.~\cite[Ch.~2]{Fo10}.

Let us superimpose two independent COE${}_n$ ensembles to obtain a new sequence of eigen-angles
$$
0 < \theta_1 < \theta_2 < \cdots < \theta_{2n} < 2 \pi,
$$
and denote it by COE${}_n\, \cup \,$COE${}_n$. It was conjectured by Dyson \cite{Dy62} and proved by Gunson \cite{Gu62} that
\begin{equation}\label{21}
{\rm alt} \, ({\rm COE}_n \, \cup \, {\rm COE}_n)\,\overset{\rm d}{=}\, {\rm CUE}_n,
\end{equation}
where the alt operation refers to the integration over alternate angles $\theta_1,\theta_3,\dots,\theta_{2n-1}$ in the region
$$
\theta_{2j} < \theta_{2j+1} < \theta_{2j+2} \qquad (j=0,\dots,n-1)
$$
with $\theta_0 = \theta_{2n} - 2 \pi$. 

The inter-relation (\ref{21}) between eigenvalue distributions implies an inter-relation between
conditioned gap probabilities. These are the probabilities, denoted by $E_{n,\beta}(k;J;w_\beta)$, or alternatively by
$E_{n,\beta}(k;J; {\rm ME}{}_{n,\beta}(w_\beta))$,  that the matrix ensemble
ME${}_{n,\beta}(w_\beta)$ contains exactly $k$ eigenvalues in the interval $J$.  Then as a direct combinatorial consequence of
(\ref{21})  one has \cite{Dy62,Me92} (cf. also~(\ref{24}), (\ref{24cp}))
\begin{multline}
E_{n,2}^{}(k;(-\theta,\theta);{\rm CUE}_n)  \\
  =   \sum_{j=0}^n \Big ( E_{n,1}(2(k-j);(-\theta,\theta);{\rm COE}_n) +
E_{n,1}^{}(2(k-j)-1;(-\theta,\theta);{\rm COE}_n) \Big )\\
 \times
\Big ( E_{n,1}^{}(2j;(-\theta,\theta);{\rm COE}_n) +
E_{n,1}^{}(2j+1;(-\theta,\theta);{\rm COE}_n) \Big ). \label{8.31p}
\end{multline}

Closely related to the determinantal structure underlying the eigenvalue PDF (\ref{1A}) for the ${\rm CUE}_n$,
together with the fact that this eigenvalue PDF is unchanged by complex conjugation, is the inter-relation \cite{Ra03}
\begin{equation}\label{CO}
|{\rm CUE}_n|  \,\overset{\rm d}{=}\,   O^+(n+1) \, \cup \,  O^-(n+1).
\end{equation}
As the name suggests, here $O^\pm(n+1)$ refers to the eigen-angles of matrices from the classical groups of the same name,
chosen with Haar measure. Eigen-angles $0$ and $\pi$, which appear for purely algebraic reasons, are ignored and, since orthogonal matrices have real
entries, for each eigen-angle $\theta \ne 0, \pi$, there is another eigen-angle $-\theta$, so that we take the one within
the range $0<\theta<\pi$ only. 
 The notation $| \cdot |$ now refers to the distribution of eigen-angles in the range $0 < \theta < \pi$,
union the negative of the eigen-angles in the range $-\pi < \theta < 0$. Though $|\cdot|$ has no effect on $O^\pm(n+1)$,
this is not the case for the ${\rm CUE}_n$, where the eigenvalue distribution, and the distribution implied by
$|{\rm CUE}_n| $ are very different.

As shown in \cite[Eq.~(2.6)]{EvenSymm}, the analogue of (\ref{CO}) for Hermitian matrix ensembles with unitary symmetry is  (\ref{1.17}).
In fact (\ref{CO}) can be deduced from (\ref{1.17}) with the Cauchy weight $w_2(x) = (1+x^2)^{-n}$,  upon applying
the change of variables (\ref{W1a}) corresponding to a stereographic projection. On the RHS this requires the facts that
under the change of variable $x = \tan \theta/2$ for each eigenvalue (see \cite[Eqs.~(2.24)--(2.28)]{EvenSymm},
\begin{equation}\label{UO}
{\rm chUE}_{\hat{m}}((1+x^2)^{-n})  \,\overset{\rm d}{=}\,
O^+(n+1), \quad
{\rm chUE}_{m}(x^2(1+x^2)^{-n})\,\overset{\rm d}{=}\,
O^-(n+1),
\end{equation}
on the LHS this change of variables simply gives 
\[
|\UE_n((1+x^2)^{-n})| \,\overset{\rm d}{=}\, |{\rm CUE}_n|.
\]

\subsection{Hermitian ensembles}\label{subsect:Hermitian}

Forrester and Rains \cite{FR} considered analogues of (\ref{21}) for ensembles of Hermitian matrices. In keeping with above notations, let
OE${}_n(w_1) \, \cup \, $OE${}_n(w_1)$ denote the superimposing of two sequences of eigenvalues, independently drawn from OE${}_n(w_1)$.
Suppose the resulting eigenvalues are ordered $x_1 > x_2 > \cdots > x_{2n}$,
and let even$ \, (\OE_n(w_1) \cup \OE_n(w_1)$) refer to the distribution of the even-location eigenvalues.
We know from \cite[pp.~186--187]{FR}, see also \cite[\S6.6]{Fo10} that this is identically distributed to an ensemble with unitary symmetry,
\begin{equation}\label{24a}
{\rm even} \, ( \OE_n(w_1) \cup \OE_n(w_1) ) \,\overset{\rm d}{=}\, \UE_n(w_2),
\end{equation}
for the pairs $(w_1,w_2)$ of weights given in Table~\ref{tab:24b}
and furthermore, up to a linear fractional transformation, these pairs of weights are \emph{unique}. 
\begin{table}[htbp]
\caption{pairs of weights satisfying Eq.~\eqref{24a}; $a>-1$}
{\begin{center}
\begin{tabular}{cccc}
case & $w_1(x)$ & $w_2(x)$ & support\\*[1mm]\hline
Laguerre$\phantom{\Big|}$  & $e^{-x/2}$& $e^{-x}$ & $(0,\infty)$\\
Jacobi$\phantom{\Big|}$ & $(1 - x)^{(a-1)/2}$  & $(1-x)^a$ & $(0,1)$\\
\end{tabular}
\end{center}}
\label{tab:24b}
\end{table}%
The inter-relation between ensembles (\ref{24a}) has as an immediate combinatorial consequence the inter-relation between
gap probabilities
\begin{equation}\label{24}
E_{n,2}(k;(0,s);w_2) = \sum_{j=0}^{2k} E_{n,1} (2k-j;(0,s);w_1) \Big ( E_{n,1}(j;(0,s);w_1) +
 E_{n,1}(j-1;(0,s);w_1)  \Big ).
  \end{equation}

It is also fruitful to consider the superimposed and decimated ensemble
even (${\rm OE}{}_n(f) \, \cup \, {\rm OE}{}_{n+1}(f)$), thus involving one ensemble
with $n$ eigenvalues and the other with $n+1$. It is shown in \cite[pp.~185--186]{FR}, see also \cite[\S6.6]{Fo10}, that this, again, is identically distributed to an ensemble with unitary symmetry
\begin{equation}\label{24c}
{\rm even} \, ({\rm OE}_n(w_1) \cup {\rm OE}_{n+1}(w_1) ) \,\overset{\rm d}{=}\, {\rm UE}_n(w_2),
\end{equation}
where $(w_1,w_2)$ is any one of the pairs $(w_1,w_2)$ of weights given in Table~\ref{tab:24c} (note that Table~\ref{tab:admissible1} gives the subset of {\em even} weights).
\begin{table}[htbp]
\caption{pairs of weights satisfying Eq.~\eqref{24c}; $a,b>-1$}
{\begin{center}
\begin{tabular}{cccc}
case & $w_1(x)$ & $w_2(x)$ & support\\*[1mm]\hline
Gauss$\phantom{\Big|}$  & $e^{-x^2/2}$& $e^{-x^2}$ & $(-\infty,\infty)$\\
Laguerre$\phantom{\Big|}$  & $x^{(a-1)/2} e^{-x/2}$ & $x^a e^{-x}$ & $(0,\infty)$\\
Jacobi$\phantom{\Big|}$ & $(1+x)^{(a-1)/2} (1 - x)^{(b-1)/2}$  & $(1+x)^{a} (1 - x)^{b}$ & $(-1,1)$\\
Cauchy$\phantom{\Big|}$ & $(1 + x^2)^{-(n+a+1)/2}$ & $(1 + x^2)^{-(n+a)}$ & $(-\infty,\infty)$
\end{tabular}
\end{center}}
\label{tab:24c}
\end{table}%
As for Table~\ref{tab:24b} in relation to (\ref{24a}), these pairs of weights were shown to be \emph{unique} up to linear transformation. 
An immediate combinatorial consequence for gap probabilities is the inter-relation
\begin{equation}\label{24cp}
E_{n,2}(k;J_s;w_2) = \sum_{j=0}^{2k+1} E_{n,1} (2k+1-j;J_s;w_1) \Big ( E_{n+1,1}(j; J_s;w_1) +
 E_{n+1,1}(j-1; J_s;w_1)  \Big ),
 \end{equation}
where $J_s$ is a single interval either starting at the left boundary of support and finishing at $s$,
or starting at $s$ and finishing at the right boundary of support.

\begin{rem}
Although it has no direct bearing on the present study, there is a decimation relation relating OE${}_n(w_1)$ for the weights
in Table~\ref{tab:24c} to a corresponding PDF (\ref{eqn:goedensity}) with $\beta = 4$ \cite{MD63,FR}, which further generalises to a decimation relation
reducing ensembles with $\beta = 2/(r+1)$, $r \in \mathbb Z^+$ to ensembles with $\beta = 2(r+1)$ \cite{Fo09}.
\end{rem}


\section{Joint Density of the Singular Values of Orthogonal Ensembles}\label{section:jointdensity}

In this section we assume that $w_1$ is an \emph{even} weight function supported on the interval $(-\omega,\omega)$.
By symmetry, we can establish the joint density of the singular values
by restricting ourselves to the cone of increasingly ordered singular
values
\begin{equation}\label{eqn:cone}
0 \leq \sigma_1 \leq \dotsb \leq \sigma_n,
\end{equation}
this way parametrizing  $|\OE_n(w_1)|$.
To simplify notation and to avoid case distinctions between odd and
even order $n$ in later parts of the paper, we
introduce two further sets of coordinates for this cone.
Writing, as detailed in \eqref{eqn:dim},
$n = 2m + \mu$ and $\hat{m}=m+\mu$ with $\mu=0,1$,
the coordinates
\begin{subequations}\label{eqn:coord_xy}
\begin{equation}\label{eqn:coord_xya}
x_j = \sigma_{2j-1}\quad (j=1,\ldots,\hat{m}), \qquad y_j = \sigma_{2j} \quad (j=1,\ldots,m)
\end{equation}
satisfy the interlacing property
\begin{equation}\label{eqn:interlacing_xy}
0 \leq x_1 \leq y_1 \leq x_2 \leq y_2 \leq \dotsb \leq x_{\hat{m}} \leq y_{\hat{m}}\leq \omega,
\end{equation}
\end{subequations}
with formally adding, if $\mu=1$, the value $y_{m+1}=\omega$. With
$x^\downarrow$ and $y^\downarrow$ denoting the $x$ and $y$ vectors
with their components taken in the reverse order, so
$x^\downarrow=(x_{\hat{m}},x_{\hat{m}-1},\dotsc,x_1)$ and
$y^\downarrow=(y_m,y_{m-1},\dotsc,y_1)$, we define, depending on the parity
of $n$, the coordinates
\begin{subequations}\label{eqn:coord_st}
\begin{equation} \label{eqn:coord_st_a}
(t,s)=(y^\downarrow,x^\downarrow) \quad (\mu=0),\qquad (t,s)=(x^\downarrow,y^\downarrow) \quad (\mu=1),
\end{equation}
satisfying the interlacing property
\begin{equation}\label{eqn:interlacing}
\omega\geq t_1 \geq s_1 \geq t_2 \geq s_2 \geq \cdots \geq t_{\hat m} \geq s_{\hat m} \geq 0,
\end{equation}
\end{subequations}
again formally adding the value $s_{m+1}=0$ if $\mu=1$.   
Since the
mapping from $\sigma=(\sigma_1,\ldots,\sigma_n)$ to either the pair of
coordinates $(x,y)$ or $(t,s)$ is orthogonal, transforming the density
between the three sets of coordinates is simply done by inserting new
variable names for old ones. Note that the $s$ variables parametrize the even-location decimated ensemble $\even |\OE_n(w_1)|$
while the $t$-variables do the same for $\odd|\OE_n(w_1)|$. We call them the even and odd singular values.

By the evenness of $w_1$ the joint probability density
of the singular values is, supported on \eqref{eqn:cone}, 
\[
q(\sigma_1,\ldots,\sigma_n) = n! \sum_{\epsilon \in \{\pm 1\}^n}
p(\epsilon_1 \sigma_1,\ldots,\epsilon_n \sigma_n) = c_{n,1} n! \cdot
\prod_{k=1}^n w(\sigma_k) \cdot D(\sigma_1,\ldots,\sigma_n)
\]
with 
\[
 D(\sigma_1,\ldots,\sigma_n) = \sum_{\epsilon \in \{\pm 1\}^n}
 |\Delta(\epsilon_1 \sigma_1,\ldots, \epsilon_n \sigma_n)|.
\]
Writing $D(x;y) = D(\sigma_1,\ldots,\sigma_n)$ in terms of $(x,y)$-coordinates, Bornemann and La Croix \cite[Eq.~(11)]{BLC} proved
in two different ways the algebraic fact
\[
D(x;y) = 2^n \cdot \Delta(x_1^2,\ldots,x_{\hat{m}}^2)\cdot y_1\cdots y_m \Delta(y_1^2,\ldots,y_,^2).
\]
Hence, we immediately get the following theorem: 
\begin{thm}\label{thm:JointDensity} Let $w_1$ be an even weight on $(-\omega,\omega)$. Then the joint probability density of $|\OE_n(w_1)|$,
supported on the cone~\eqref{eqn:interlacing_xy}, is given by
\begin{equation}\label{eqn:singularDensity}
q(x;y) = c_n \cdot \left( \prod_{k=1}^{\hat{m}} w_1(x_k) \cdot
\Delta(x_1^2,\ldots,x_{\hat{m}}^2) \right)\cdot \left( \prod_{k=1}^m
y_k w_1(y_k) \cdot \Delta(y_1^2,\ldots,y_{m}^2)\right)
\end{equation}
with $c_{n}= c_{n,1} n! 2^n$.
\end{thm}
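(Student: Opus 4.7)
The plan is to build the joint singular-value density directly from the eigenvalue density (\ref{eqn:goedensity}) by summing over the $2^n$ sign patterns of the eigenvalues, and then to invoke the algebraic identity of Bornemann and La Croix quoted immediately above the statement. Since that identity does all the nontrivial algebraic work, the proof itself reduces to bookkeeping of constants together with a trivial change of coordinates.

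First, I pass from eigenvalues to singular values. Since $p_1(x_1,\ldots,x_n)$ is a symmetric function on $(-\omega,\omega)^n$, the density of the unordered absolute values at a point $\sigma\in[0,\omega]^n$ is obtained by summing over the $2^n$ sign patterns, and restricting to the ordered cone (\ref{eqn:cone}) multiplies this symmetric function by $n!$. Combined with the evenness of $w_1$, so that $w_1(\epsilon_k\sigma_k)=w_1(\sigma_k)$, this yields
\[
q(\sigma_1,\ldots,\sigma_n)=c_{n,1}\,n!\prod_{k=1}^n w_1(\sigma_k)\cdot D(\sigma_1,\ldots,\sigma_n),
\]
with $D(\sigma)=\sum_{\epsilon\in\{\pm1\}^n}|\Delta(\epsilon_1\sigma_1,\ldots,\epsilon_n\sigma_n)|$, exactly as computed in the excerpt.

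Second, I relabel via the $(x,y)$-coordinates of (\ref{eqn:coord_xya}). Because this is merely a permutation of the components of $\sigma$, densities transform by literal renaming of variables; the weight product splits as $\prod_{k=1}^n w_1(\sigma_k)=\prod_{k=1}^{\hat m} w_1(x_k)\cdot\prod_{k=1}^{m} w_1(y_k)$. Substituting the Bornemann--La Croix identity
\[
D(x;y)=2^n\,\Delta(x_1^2,\ldots,x_{\hat m}^2)\cdot y_1\cdots y_m\cdot \Delta(y_1^2,\ldots,y_m^2)
\]
and absorbing $y_1\cdots y_m$ into the $y$-weight factor as $y_k w_1(y_k)$, I collect the constants as $c_n=c_{n,1}\,n!\,2^n$ and obtain the factorized form (\ref{eqn:singularDensity}).

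The only real obstacle is the algebraic identity itself, which is subtle because the absolute values in $D$ block any naive cancellation of odd monomials under the $\epsilon$-sum. A natural route is to write each $|\Delta(\epsilon\sigma)|$ as $\sign(\pi_\epsilon)\,\Delta(\epsilon_{\pi_\epsilon(1)}\sigma_1,\ldots,\epsilon_{\pi_\epsilon(n)}\sigma_n)$, where $\pi_\epsilon$ is the sorting permutation of $(\epsilon_1\sigma_1,\ldots,\epsilon_n\sigma_n)$, expand determinantally, and then track which monomials survive after the outer sum over $\epsilon$. Since this calculation is carried out in two independent ways in \cite{BLC}, I would simply invoke their identity as a black box rather than reprove it.
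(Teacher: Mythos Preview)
Your proof is correct and follows exactly the same route as the paper: derive $q(\sigma)=c_{n,1}\,n!\,\prod w_1(\sigma_k)\cdot D(\sigma)$ by summing over sign patterns, then invoke the Bornemann--La Croix identity $D(x;y)=2^n\,\Delta(x_1^2,\ldots,x_{\hat m}^2)\,y_1\cdots y_m\,\Delta(y_1^2,\ldots,y_m^2)$ as a black box and collect constants. The paper's argument is identical in structure and brevity; your version merely makes the bookkeeping of the $n!$ factor and the coordinate relabelling more explicit.
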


\begin{rem} Because of the interlacing in \eqref{eqn:interlacing_xy}, this factorization does {\em not} reveal an independence
between the $x$ and $y$ variables.
\end{rem}


\section{Admissible Symmetric Weights}\label{section:admissible}

\begin{table}[tbp]
\caption{admissible symmetric weights $w_1(x)$}
{\small\begin{center}
\begin{tabular}{ccccccccc}
case & parameter & order & $w_1(x)$ & $\omega$ & $\alpha_k$ & $\beta_k$ & $\phi(x)$ & $\theta$\\*[1mm]\hline
Gauss$\phantom{\Bigg|}$ & --- & $\kappa<\infty$ & $e^{-x^2/2}$& $\infty$ & $1$ & $k-1$ & $1$ & $\sqrt{\displaystyle\frac{\pi}{2}}$\\
Jacobi$\phantom{\Bigg|}$& $a >-1$& $\kappa<\infty$ & $(1-x^2)^a$ & $1$ & $\dfrac{1}{2a+1+k}$ & $\dfrac{k-1}{2a+1+k}$ & $1-x^2$ & $\displaystyle\frac{\sqrt{\pi}\,\Gamma(a+1)}{2\Gamma(a+\frac{3}{2})}$\\
Cauchy$\phantom{\Bigg|}$& $a>-\dfrac12$ & $\kappa<2a$ & $(1+x^2)^{-a-1}$ & $\infty$ & $\dfrac{1}{2a+1-k}$ & $\dfrac{k-1}{2a+1-k}$ & $1+x^2$& $\displaystyle\frac{\sqrt{\pi}\,\Gamma(a+\frac12)}{2\Gamma(a+1)}$\\*[1mm]\hline
\end{tabular}
\end{center}}
\label{tab:admissible}
\end{table}

We call a smooth integrable weight $w_1: (-\omega,\omega) \to (0,\infty)$ {\em admissible} of order $\kappa$ and
mass
\begin{equation}\label{eq:theta}
2\theta = \int_{-\omega}^\omega w_1(\xi)\,d\xi,
\end{equation}
if it
satisfies the following properties
\begin{itemize}
\item[(a)] $w_1$ is even;
\item[(b)] $w_1$ is normalized: $w_1(0)=1$;
\item[(c)] $w_1$ satisfies a three-term recurrence of antiderivatives of the form
\begin{equation}\label{eqn:recursion}
\int^x \xi^{k} w_1(\xi) \,d\xi = -\alpha_{k}  x^{k-1}  \phi(x) w_1(x)  + \beta_k \int^x \xi^{k-2}   w_1(\xi) \,d\xi \qquad (k=1,2,\ldots,\kappa),
\end{equation}
with a smooth function $\phi:(-\omega,\omega)\to (0,\infty)$ and constants $\alpha_k, \beta_k$ such that $\beta_1=0$;
\item[(d)] $w_1$ vanishes at the boundary:
\[
\lim_{x\to\omega} x^k w_1(x) = \lim_{x\to\omega} x^k \phi(x) w_1(x) = 0 \qquad (k=0,1,\ldots,\kappa).
\]
\end{itemize}
Table~\ref{tab:admissible} lists three
cases of such admissible weights; by Theorem~\ref{thm:classification} below, these are {\em all} possible cases.

By defining
$\alpha_0=1$, $\beta_0=0$ and
\[
\psi(x) = - \frac{1}{\phi(x) w_1(x)}\int_0^x w_1(\xi)\,d\xi\qquad (-\omega < x < \omega),
\]
the recurrence (c) extends to the case $k=0$ if we replace $x^{-1}$ by $\psi(x)$. 
By introducing the vectors
\[
\pi_\nu^n(x) = 
\begin{pmatrix}
x^{\nu} \\
x^{\nu +2}\\
\vdots\\
x^{\nu +2n-2}
\end{pmatrix}
  \in \R^n \qquad (\nu = -1,0,1),
\]
with the understanding that, instead of $x^{-1}$, the first entry of $\pi_{-1}^n(x)$ is in fact $\psi(x)$, we
can write the thus extended recurrence in 
compact matrix-vector form
\begin{subequations}\label{eqn:compactrecurrence}
\begin{align}
\int^x w_1(\xi) \pi_\nu^n(\xi) \,d\xi &=  L_{n,\nu}\cdot \tilde{w}_1(x) \pi_{\nu-1}^n (x) \qquad (\nu=0,1,\;\; 2n+\nu \leq \kappa+2),\\
 \tilde{w}_1(x) &= \phi(x)\,w_1(x),
\end{align}
\end{subequations}
with a constant \emph{lower triangular} matrix $L_{n,\nu} \in {\R^{n\times n}}$ having the numbers $-\alpha_\nu,-\alpha_{\nu+2},\ldots,-\alpha_{\nu+2n-2}$ along its main diagonal.
In particular, there holds
\begin{equation}\label{eqn:Ldet}
\det L_{n,\nu} = (-1)^n  A_{n,\nu},\qquad A_{n,\nu} = \prod_{k=0}^{n-1} \alpha_{2k+\nu}.
\end{equation}
Since within the range of $k$ restricted by the order $\kappa$ the constants $\alpha_k$ and
$\beta_k$ given in Table~\ref{tab:admissible} are strictly positive (with the exception of $\beta_1=0$), we have $A_{n,\nu}>0$. We call $\tilde{w}_1=\phi w_1$ the {\em companion weight} of $w_1(x)$ and observe that
\begin{equation}\label{eqn:thetalim}
\lim_{s\to\omega} \tilde{w}_1(s)\psi(s) = -\theta.
\end{equation}

In analogy to the results recalled in Section~\ref{subsect:Hermitian}, we have the following \emph{uniqueness} result.

\begin{thm}\label{thm:classification}
Up to a rescaling of $x$, all possible admissible weights $w_1(x)$ are listed in Table~\ref{tab:admissible}. Actually,
properties (b)--(d) of an admissible weight are sufficient for the conclusion to hold, that is, those properties already imply the evenness assumption
(a).
\end{thm}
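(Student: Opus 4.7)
The strategy is to exploit the recurrence (c) at the two lowest indices $k=1$ and $k=2$ to simultaneously pin down the shape of $\phi$ and a Pearson-type ODE for $w_1$; everything else will follow by integrating a first-order ODE and matching constants.

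Concretely, I would first differentiate (c) in $x$. At $k=1$, using $\beta_1=0$, this gives
\[
\alpha_1(\phi w_1)'(x) = -x\,w_1(x),
\]
i.e.\ $\alpha_1\tilde w_1'(x)=-xw_1(x)$. At $k=2$, differentiation and substitution of the $k=1$ identity yields
\[
x^2 w_1(x) = -\alpha_2\phi(x)w_1(x) + (\alpha_2/\alpha_1)x^2 w_1(x) + \beta_2 w_1(x),
\]
and cancelling the positive factor $w_1(x)$ forces $\phi$ to be the \emph{even quadratic} $\phi(x)=Ax^2+B$ with $A=1/\alpha_1-1/\alpha_2$, $B=\beta_2/\alpha_2$. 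Plugging this back into the $k=1$ relation gives a separable ODE
\[
\frac{w_1'(x)}{w_1(x)} = \frac{-(1+2\alpha_1 A)\,x}{\alpha_1(Ax^2+B)},
\]
which, together with the normalization $w_1(0)=1$, has a unique solution in each of three regimes: $e^{-cx^2}$ if $A=0$, a power $(1+|A/B|x^2)^{-p}$ if $AB>0$, and a power $(1-|A/B|x^2)^q$ if $AB<0$. In every case the answer is manifestly even, so properties (b)--(d) already force (a), and this is where the second assertion of the theorem comes from.

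To complete the classification, a rescaling of $x$ absorbs $|A|$ and $|B|$, bringing each case to $e^{-x^2/2}$, $(1+x^2)^{-a-1}$, or $(1-x^2)^a$. The boundary condition (d) then fixes the support $\omega$ (it equals $1$ in the Jacobi case, where $\phi$ vanishes at $\pm 1$, and $\infty$ otherwise) and, via integrability at $\pm\omega$ combined with the requirement that the recurrence makes sense up to order $\kappa$, cuts out exactly the parameter ranges $a>-1$ (Jacobi) and $a>-1/2$ with $\kappa<2a$ (Cauchy). Finally, to confirm that the full recurrence (c) with the explicit $\alpha_k,\beta_k$ listed in Table~\ref{tab:admissible} actually holds, I would integrate $\int^x\xi^k w_1\,d\xi$ by parts against $(\phi w_1)'=-xw_1/\alpha_1$ and expand $\phi(\xi)=A\xi^2+B$; solving the resulting linear relation for $\int^x \xi^k w_1\,d\xi$ produces
\[
\alpha_k = \frac{\alpha_1}{1-\alpha_1(k-1)A},\qquad \beta_k = \frac{\alpha_1(k-1)B}{1-\alpha_1(k-1)A},
\]
which specializes to the table entries in each of the three cases.

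The real content — and the main obstacle — is the derivation that $\phi$ must be quadratic from the $k=1,2$ cases alone; once this Pearson-type structure is exposed, the rest is bookkeeping. A secondary subtlety, which I would treat carefully, is that in the Cauchy case the denominator $1-\alpha_1(k-1)A$ can vanish, and admissibility of order $\kappa$ must be checked to be consistent with the inequality $\kappa<2a$ stated in the table.
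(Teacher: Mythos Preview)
Your approach matches the paper's: differentiate (c), use $k=1$ to obtain $(\phi w_1)'=-xw_1/\alpha_1$, combine with a second instance to force $\phi$ to be an even quadratic, then integrate the resulting separable first-order ODE for $w_1$ and sort the solutions by sign pattern. The only slip is that your trichotomy $A=0$, $AB>0$, $AB<0$ omits the case $B=\phi(0)=0$ with $A\neq 0$; the paper treats this separately (its Case~2) and rules it out because the resulting solution $w_1\propto |x|^{\text{power}}$ cannot satisfy both the normalization $w_1(0)=1$ and the boundary decay (d).
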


\begin{proof} Let $w_1(x)$ be an admissible weight. Differentiating \eqref{eqn:recursion} yields 
\begin{equation}\label{eqn:ode1}
(x^2 - \beta_k + (k-1)\alpha_k \phi) w_1 = -\alpha_k x (\phi w_1)'\qquad (k=1,2,\ldots,\kappa).
\end{equation}
Inserting $x=0$ gives
\[
\beta_k = (k-1) \alpha_k \phi(0).
\]
Therefore, if $\alpha_k=0$ for some positive integer $k$, we would get also that $\beta_k=0$ and, hence, that $\int^x \xi^k w_1(\xi)\,d\xi = 0$
in contradiction to $w_1$ being positive. We conclude that
\[
\alpha_k \neq 0 \qquad (k=1,2,\ldots,\kappa).
\]
Inserting $k=1$ into \eqref{eqn:ode1} gives the differential equation
\begin{equation}\label{eqn:ode2}
(\phi w_1)' = -\frac{1}{\alpha_1} x w_1,\qquad w_1(0)=1.
\end{equation}
Inserting this expression for $(\phi w_1)'$ into \eqref{eqn:ode1} and rearranging, we get
\[
\frac{x^2-\beta_k}{\alpha_k} + (k-1)\phi = \frac{x^2}{\alpha_1}\qquad (k=1,2,\ldots,\kappa).
\]
Solving for $\phi$ gives
\[
\phi(x) = \frac{1}{k-1}\frac{\alpha_k-\alpha_1}{\alpha_k\alpha_1} x^2 + \frac{1}{k-1}\frac{\beta_k}{\alpha_k}
= \frac{1}{k-1}\frac{\alpha_k-\alpha_1}{\alpha_k\alpha_1} x^2 + \phi(0).
\]
Since $\phi(x)$ is assumed to be \emph{independent} of $k$, we get
that
\begin{equation}\label{eqn:phi}
\phi(x) = \phi(0) + \tau x^2,\qquad 
\tau = \frac{\alpha_2-\alpha_1}{\alpha_2\alpha_1} = \frac{1}{k-1}\frac{\alpha_k-\alpha_1}{\alpha_k\alpha_1}  
\quad (k=2,3,\ldots,\kappa),
\end{equation}
which can be solved for $\alpha_k$:
\[
\alpha_k = \frac{\alpha_1\alpha_2}{\alpha_2 + (k-1)(\alpha_1-\alpha_2)}.
\]
Now, we distinguish four cases depending on whether $\phi(0)$ and $\tau$ are zero or not.

\paragraph{Case 1} $\phi(0)=0$ and $\tau=0$, that is, $\phi\equiv0$. By \eqref{eqn:ode2} $x w_1 \equiv 0$, which contradicts the positivity of $w_1$.

\paragraph{Case 2} $\phi(0)=0$ and $\tau\neq 0$. By absorbing a rescaling of the
$\alpha_k$ into $\phi$ we can arrange for $\phi(x) = \pm x^2$.
Now, solving the differential equation \eqref{eqn:ode2} for $w_1$ yields
\[
w_1(x) = c x^{-2\mp\frac{1}{\alpha_1}}
\]
with some constant $c$. For $w_1(0)=1$ to make sense, we would need the exponent to vanish, implying that already $w_1 \equiv 1$.
But such a weight would not satisfy $w_1(x)\to 0$ as $x\to\omega$. 

\paragraph{Case 3} $\phi(0)\neq 0$ and $\tau=0$. By rescaling $x$ we can arrange for
$\alpha_1=\pm 1$ and $\phi\equiv 1$. Now, solving the initial value problem \eqref{eqn:ode2} for $w_1$ yields
\[
w_1(x) = e^{\mp x^2/2}.
\]
From $w_1(x)\to 0$ as $x\to\omega$ we get $\alpha_1=1$ and $\omega=\infty$. This yields
 the Gauss case of Table~\ref{tab:admissible}.

\paragraph{Case 4} $\phi(0)\neq 0$ and $\alpha_1\neq \alpha_2$. By rescaling $x$ and absorbing a rescaling of the $\alpha_k$ into $\phi$ we can arrange for $\phi(x) = 1 \pm x^2$.
 Now, solving the initial value problem \eqref{eqn:ode2} for $w_1$ yields
\[
w_1(x) = (1\pm x^2)^{-1\mp \frac{1}{2\alpha_1}}
\]
In the case $\phi(x)=1-x^2$ we set $a=-1+\frac{1}{2\alpha_1}$ and get, assuring integrability,
\[
w_1(x) = (1-x^2)^{a},\quad a>-1,\qquad \omega=1,\qquad\alpha_1 = \frac{1}{2a+2},
\]
which yields the symmetric Jacobi case of Table~\ref{tab:admissible}.
In the case $\phi(x) = 1+x^2$ we set $a=\frac{1}{2\alpha_1}$ and get, once more assuring integrability,
\[
w_1(x) = (1+x^2)^{-a-1},\qquad a>-\frac{1}{2},\qquad \omega=\infty,\qquad \alpha_1 = \frac{1}{2a},
\]
which finally yields the Cauchy case of Table~\ref{tab:admissible} (the only case where there is a restriction of
the maximum order $\kappa$ that has to be checked).
\end{proof}

\begin{rem} 
If $\phi(0)\neq 0$, Eqs.~\eqref{eqn:ode2} and \eqref{eqn:phi} imply that the logarithmic derivative of $\tilde w_1 = \phi w_1$,
namely 
\[
\frac{\tilde w_1'}{\tilde w_1} = - \frac{x}{\alpha_1\phi(x)},
\]
takes the form of a ratio of a linear and a quadratic polynomial.
Hence,  we immediately see that $\tilde w_1$ must be a \emph{classical} weight. Because of the common denominator~$\phi$
in the logarithmic derivatives, the same conclusion holds for the weights $w_1$ and $w_2 = w_1 \tilde w_1$. 
Therefore, we could have finished the proof by checking properties (b)--(d) for each entry of a list of all classical weights. 
\end{rem}

\section{Integrating Out the Odd and Even Singular Values}\label{section:evensingularvalues}

\subsection{Integrating out the odd singular values}
We now prove Theorem~\ref{thm:main1}.
To begin with, we transform the joint
density (\ref{eqn:singularDensity})  to $(s,t)$ coordinates, that is,
 \[
 q(s;t) = c_n \cdot 
 g_{\mu}(s_1,\ldots,s_m) \cdot g_{1-\mu}(t_1,\ldots,t_{\hat{m}})
\]
with functions
\begin{equation}\label{eqn:gmu}
  g_{\nu}(z_1,\ldots,z_m) = \prod_{k=1}^m z_k^\nu w_1(z_k) \cdot \Delta(z_m^2,\ldots,z_1^2).
\end{equation}
Likewise, we write $\tilde{g}_\nu$ for the same form of expression using the companion weight $\tilde w_1$ instead of $w_1$.

Now, Corollary~\ref{cor:2} below shows that integrating out the odd singular
values $t$ subject to the interlacing~\eqref{eqn:interlacing} gives
the following marginal density of the even singular values:
\begin{multline}\label{eqn:even_density}
q_{\text{even}}(s_1,\ldots,s_m) = 
c_n \theta^\mu A_{\hat{m},1-\mu} \cdot g_\mu(s_1,\ldots,s_m)  \tilde{g}_\mu(s_1,\ldots,s_m)\\*[2mm]
= c_n \theta^\mu A_{\hat{m},1-\mu}  \cdot\prod_{k=1}^m s_k^{2\mu} w_2(s_k) \cdot \Delta(s_m^2,\ldots,s_1^2)^2\qquad (\omega\geq s_1 \geq \cdots \geq s_m \geq 0)
\end{multline}
defining the associated weight function (cf. \cite[Remark on p.~186]{FR})
\begin{equation}\label{eqn:w2}
w_2(s) = w_1(s) \hat{w_1}(s) = \phi(s) w_1(s)^2.
\end{equation}
Since the last expression in \eqref{eqn:even_density} is easily identified as the joint density of $\chUE_m(x^{2\mu}w_2)$, see Eq.~\eqref{eqn:chiraldensity}, we have finally proved Theorem~\ref{thm:main1}.

\begin{rem} As a side product, the representation \eqref{eqn:even_density} shows that the normalization constant $c_{m,\mu}^{\text{ch}}$ of 
the joint density of $\OE(x^{2\mu}w_1)$, if extended by symmetry to be supported on $(0,\infty)^m$, is given by
\[
c_{m,\mu}^{\text{ch}} = c_{n,1}A_{\hat{m},1-\mu}\theta^\mu\frac{2^n n! }{m!}.
\]
\end{rem}

The integration is based on the following lemma and its first Corollary~\ref{cor:2}.

\begin{lemma}\label{lem:1} Let $\tilde{w}_1$ be the companion of the admissible weight $w_1$.
Then, there holds 
\begin{multline*}
\int_{x_1}^{x_2}d \xi_1 \cdots \int_{x_n}^{x_{n+1}}d \xi_n \;\det\left(w_1(\xi_1)\pi_\nu ^n(\xi_1) \; \cdots \; w_1(\xi_n)\pi_\nu^n(\xi_n)\right)\\*[2mm]
 = A_{n,\nu}
\det
\begin{pmatrix}
\tilde{w}_1(x_1)\pi_{\nu-1}^n(x_1) & \cdots & \tilde{w}_1(x_{n+1})\pi_{\nu-1}^n(x_{n+1}) \\*[1mm]
1 & \cdots & 1
\end{pmatrix}\qquad (\nu=0,1).
\end{multline*}
Here, all integration bounds are within $(0,\omega)$ and, in the case of a Cauchy weight,  $2n+\nu \leq \kappa+2$. 
\end{lemma}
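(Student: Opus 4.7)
The plan is to chain three essentially mechanical manipulations: multilinearity of the determinant, the compact recurrence \eqref{eqn:compactrecurrence}, and a single bordered-determinant expansion.

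First, in the integrand $\det\bigl(w_1(\xi_j)\pi_\nu^n(\xi_j)\bigr)_{j=1}^n$ the variable $\xi_j$ appears only in column $j$, and its range $(x_j,x_{j+1})$ is independent of the other $\xi_i$. By multilinearity of the determinant in its columns, each integral moves inside to give
\[
\text{LHS}=\det\left(\int_{x_j}^{x_{j+1}} w_1(\xi)\pi_\nu^n(\xi)\,d\xi\right)_{j=1,\ldots,n}.
\]
Now I apply \eqref{eqn:compactrecurrence} column by column. Writing $v_j:=\tilde w_1(x_j)\pi_{\nu-1}^n(x_j)$, each column becomes $L_{n,\nu}(v_{j+1}-v_j)$. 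For $\nu=0$ this uses the extended convention that the first entry of $\pi_{-1}^n$ is $\psi(x)$: by the definition of $\psi$ and $\alpha_0=1$, one has $\int^x w_1(\xi)\,d\xi=-\tilde w_1(x)\psi(x)$, so the compact recurrence remains valid at the boundary. Factoring $L_{n,\nu}$ out of the determinant and using $\det L_{n,\nu}=(-1)^n A_{n,\nu}$ yields
\[
\text{LHS}=(-1)^n A_{n,\nu}\,\det(v_{j+1}-v_j)_{j=1,\ldots,n}.
\]

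To identify this with the bordered determinant on the RHS, I perform column operations on the $(n{+}1)\times(n{+}1)$ matrix appearing there: for $j=n,n-1,\dotsc,1$, in that order, subtract column $j$ from column $j{+}1$. Column $1$ remains $(v_1,1)^\top$ while columns $2,\dotsc,n{+}1$ become $(v_{j+1}-v_j,0)^\top$. Laplace expansion along the resulting last row $(1,0,\dotsc,0)$ contributes the cofactor sign $(-1)^{(n+1)+1}=(-1)^n$, giving
\[
\det\begin{pmatrix}v_1 & \cdots & v_{n+1} \\ 1 & \cdots & 1\end{pmatrix}=(-1)^n\det(v_{j+1}-v_j)_{j=1,\ldots,n}.
\]
Substituting this back cancels the sign $(-1)^n\cdot(-1)^n=1$ and produces precisely $A_{n,\nu}$ times the bordered determinant, which is the claim.

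There is no substantive obstacle, only bookkeeping: the signs in the Laplace expansion, the right-to-left order of the column operations, and the applicability of the compact recurrence. The last point deserves mild attention because the recurrence \eqref{eqn:recursion} is only stated for $1\leq k\leq\kappa$; the top index appearing in $\pi_\nu^n$ is $\nu+2n-2$, so we need $\nu+2n-2\leq\kappa$ together with the $\psi$-extension at index $0$, which is exactly what the hypothesis $2n+\nu\leq\kappa+2$ (relevant only in the Cauchy case, where $\kappa<\infty$) provides.
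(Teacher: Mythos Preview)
Your proof is correct and follows essentially the same approach as the paper: multilinearity to pull the integrals into the columns, the compact recurrence \eqref{eqn:compactrecurrence} together with $\det L_{n,\nu}=(-1)^nA_{n,\nu}$, and then the identification of the $n\times n$ determinant of differences with the bordered $(n{+}1)\times(n{+}1)$ determinant via column operations and a last-row expansion. The only cosmetic difference is that the paper performs the column operations in the additive direction (first column added to the second, then the new second to the third, etc.) to pass from differences to values, whereas you run the subtractive version in the opposite direction; your explicit check of the order constraint $2n+\nu\le\kappa+2$ and of the $\psi$-extension at index $0$ matches the paper's setup.
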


\begin{proof} Simplifying the notation to $\pi_\nu(x) = \pi^n_\nu(x)$, we calculate by means of \eqref{eqn:compactrecurrence}
and \eqref{eqn:Ldet}
\begin{multline*}
\int_{x_1}^{x_2}d \xi_1 \cdots \int_{x_n}^{x_{n+1}}d \xi_n \;\det\left(w_1(\xi_1)\pi_\nu(\xi_1) \; \cdots \; w_1(\xi_n)\pi_\nu(\xi_n)\right)\\*[2mm]
 = \det\left(\int_{x_1}^{x_2} w_1(\xi_1)\pi_\nu(\xi_1) d\xi_1 \; \cdots \; \int_{x_n}^{x_{n+1}} w_1(\xi_n)\pi_\nu(\xi_n) d\xi_{n}\right)
 \end{multline*}
 \begin{multline*}
=\underbrace{ \det L_{n,\nu}}_{=(-1)^n A_{n,\nu}}  \cdot 
\det\left(\tilde{w}_1\pi_{\nu-1}\Big|_{x_1}^{x_2}\;\; \cdots \;\; \tilde{w}_1\pi_{\nu-1}\Big|_{x_n}^{x_{n+1}}\right)\\*[2mm]
= A_{n,\nu}\det
\begin{pmatrix}
\tilde{w}_1(x_1)\pi_{\nu-1}(x_1) & \tilde{w}_1\pi_{\nu-1}\Big|_{x_1}^{x_2} & \cdots & \tilde{w}_1 \pi_{\nu-1}\Big|_{x_n}^{x_{n+1}} \\*[3mm]
1 & 0 & \cdots & 0
\end{pmatrix}\\*[2mm]
=  A_{n,\nu} \det
\begin{pmatrix}
\tilde{w}_1(x_1)\pi_{\nu-1}(x_1) & \tilde{w}_1(x_2)\pi_{\nu-1}(x_2) &\cdots & \tilde{w}_1(x_{n+1})\pi_{\nu-1}(x_{n+1}) \\*[2mm]
1 & 1 & \cdots & 1
\end{pmatrix}.
\end{multline*}
In the last step we added the first column to the second, then
 the second to the third, etc.
\end{proof}

\begin{cor}\label{cor:2} Let $g_{\nu}$, $\tilde{g}_\nu$ be as in \eqref{eqn:gmu} and put $s_{\hat m}=0$ if $\mu=1$.
Then, there holds 
\begin{equation}\label{DAV}
\int_{s_1}^{\omega} dt_1 \int_{s_2}^{s_1} dt_2\cdots \int_{s_{\hat m}}^{s_{\hat{m}-1}} dt_{\hat{m}}\, g_{1-\mu}(t_1,\ldots,t_{\hat m}) =\theta^{\mu} A_{\hat{m},1-\mu}\cdot \tilde{g}_{\mu}(s_1,\ldots,s_m)\qquad (\mu=0,1).
\end{equation}
Here, all integration bounds are within $(0,\omega)$ and, in the case of a Cauchy weight,  $n=2m+\mu \leq \kappa+2$. 
\end{cor}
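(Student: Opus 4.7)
The plan is to recast $g_{1-\mu}(t_1,\ldots,t_{\hat m})$ as a single determinant of the form appearing in Lemma~\ref{lem:1}, perform the nested integration by invoking the lemma, and then simplify the resulting bordered determinant via Laplace expansion using the boundary data of $\tilde w_1$ at $0$ and at $\omega$.

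First, I would reverse the order of the integration variables, setting $\tau_j = t_{\hat m+1-j}$ and $u_j = s_{\hat m+1-j}$ for $j=1,\ldots,\hat m$, together with $u_{\hat m+1}=\omega$; the hypothesis $s_{\hat m}=0$ for $\mu=1$ then gives $u_1=0$. Under this reversal, the interlacing \eqref{eqn:interlacing} becomes the nested corridor $u_j\leq \tau_j\leq u_{j+1}$ required by Lemma~\ref{lem:1}, while the identity $\Delta(t_{\hat m}^2,\ldots,t_1^2)=\Delta(\tau_1^2,\ldots,\tau_{\hat m}^2)$ combined with the column-scaling $\prod_j \tau_j^{1-\mu}w_1(\tau_j)$ lets one write the integrand cleanly as $\det\bigl(w_1(\tau_j)\pi_{1-\mu}^{\hat m}(\tau_j)\bigr)_{i,j=1}^{\hat m}$. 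Applying Lemma~\ref{lem:1} with $n=\hat m$, $\nu=1-\mu$ expresses the left-hand side of \eqref{DAV} as $A_{\hat m,1-\mu}$ times the $(\hat m+1)\times(\hat m+1)$ determinant with columns $\tilde w_1(u_j)\pi_{-\mu}^{\hat m}(u_j)$ capped by a bottom row of ones.

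The remaining work is to evaluate that bordered determinant. The column at $u_{\hat m+1}=\omega$ contains entries $\tilde w_1(\omega)\omega^k$, all of which vanish by property~(d) of admissibility, with the single exception (active only when $\mu=1$) of the first entry $\tilde w_1(\omega)\psi(\omega)=-\theta$ supplied by \eqref{eqn:thetalim}. For $\mu=0$ the $\omega$-column therefore reduces to the bottom basis vector, and one Laplace expansion directly yields $\det\bigl(\tilde w_1(u_j)\pi_0^{\hat m}(u_j)\bigr)_{j=1}^{\hat m}=\tilde g_0(s_1,\ldots,s_m)$, matching the claim with $\theta^0=1$. For $\mu=1$ the column at $u_1=0$ also collapses to a single $1$ in the bottom entry, since $\psi(0)=0$ and $0^k=0$ for $k\geq 1$; a Laplace expansion along that column, followed by a second expansion along the $\omega$-column (whose only surviving entry is the top one, equal to $-\theta$), extracts precisely the factor $\theta$, leaving an $m\times m$ residual determinant that is immediately recognised, after the Vandermonde identity, as $\tilde g_1(s_1,\ldots,s_m)$. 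The two cofactor signs pair up and cancel, giving $\theta\, A_{\hat m,0}\,\tilde g_1(s)=\theta^\mu A_{\hat m,1-\mu}\tilde g_\mu(s)$. The main obstacle I anticipate is handling the $\mu=1$ case bookkeeping: one must exploit both boundary behaviours of $\psi$ simultaneously (vanishing at $0$ and producing $-\theta$ at $\omega$) while keeping the signs from the successive Laplace expansions consistent with the parity conventions of $\pi_{-1}^{\hat m}$.
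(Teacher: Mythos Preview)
Your proposal is correct and follows essentially the same route as the paper: rewrite $g_{1-\mu}$ as the determinant $\det\bigl(w_1(\cdot)\pi_{1-\mu}^{\hat m}(\cdot)\bigr)$ (which is exactly the paper's identity \eqref{eqn:gmu_as_det} after your re-indexing), apply Lemma~\ref{lem:1}, and then collapse the bordered determinant using the boundary behaviours $\tilde w_1\pi_0^m\to 0$ at $\omega$ for $\mu=0$, and $\pi_{-1}^{\hat m}(0)=0$ together with $\tilde w_1\psi\to-\theta$ at $\omega$ for $\mu=1$. The two successive Laplace expansions in the $\mu=1$ case, and the observation that their cofactor signs cancel, are handled identically in the paper.
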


\begin{proof} Using the notation of Lemma~\ref{lem:1}, we first observe that 
\begin{equation}\label{eqn:gmu_as_det}
g_{\mu}(z_1,\ldots,z_m) = \det\left(w_1(z_m) \pi^m_\mu(z_m)\;\cdots\; w_1(z_1)\pi^m_\mu(z_1)\right)
\end{equation}
and the same for $\tilde{g}_\mu$ with weight $\tilde{w}_1$.
Now, Lemma~\ref{lem:1} yields, first using $\tilde{w}_1(s) \pi_0^m(s)\to 0$ as $s\to\omega$, that for $\mu=0$
\begin{multline*}
\int_{s_1}^{\omega} dt_1 \int_{s_2}^{s_1} dt_2\cdots \int_{s_m}^{s_{m-1}} dt_m\, \,\det\left(w_1(t_m)\pi_1^m(t_m)\;\cdots\; w_1(t_1) \pi_1^m(t_1)\right)\\*[2mm]
 = 
A_{m,1}\det
\begin{pmatrix}
\tilde{w}_1(s_m)\pi_{0}^m(s_m) & \cdots & \tilde{w}_1(s_1)\pi_{0}^m(s_{1}) & 0 \\*[1mm]
1 &  \cdots & 1 & 1
\end{pmatrix}\\*[2mm]
 = 
A_{m,1}\det\left(\tilde{w}_1(s_m)\pi_{0}^m(s_m)\;\cdots\;\tilde{w}_1(s_1)\pi_{0}^m(s_{1})\right)
\end{multline*}
and then, using $\pi_{-1}^{m+1}(0)=0$ and $\tilde{w}_1(s)\psi(s) \to -\theta$ as $s\to\omega$, that for $\mu=1$
\begin{multline*}
\int_{s_1}^{\omega} dt_1 \int_{s_2}^{s_1} dt_2\cdots \int_{0}^{s_m} dt_{m+1} \, \det\left(w_1(t_{m+1})\pi_0^{m+1}(t_{m+1})\;\cdots\; w_1(t_1)\pi_0^{m+1}(t_1)\right) \\*[2mm]
 = A_{m+1,0}\cdot
\det
\begin{pmatrix}
0 & \tilde{w}_1(s_m)\pi_{-1}^{m+1}(s_m) & \cdots & \tilde{w}_1(s_1)\pi_{-1}^{m+1}(s_{1}) & \lim_{s\to\omega} \tilde{w}_1(s)\pi_{-1}^{m+1}(s)\\*[1mm]
1 & 1 & \cdots & 1 & 1
\end{pmatrix}\\*[2mm]
=
(-1)^m A_{m+1,0} \cdot\det
\begin{pmatrix}
 \tilde{w}_1(s_m)\pi_{-1}^{m+1}(s_m) & \cdots & \tilde{w}_1(s_1)\pi_{-1}^{m+1}(s_1) & \lim_{s\to\infty}\tilde{w}_1(s) \pi_{-1}^{m+1}(s)
\end{pmatrix}\\*[2mm]
=
(-1)^m A_{m+1,0}\cdot\det
\begin{pmatrix}
\tilde{w}_1(s_m) \psi(s_m) & \cdots & \tilde{w}_1(s_1)\psi(s_1) & -\theta \\*[1mm]
\tilde{w}_1(s_m) \pi_{1}^m(s_m) & \cdots & \tilde{w}_1(s_1) \pi_{1}^m(s_1) & 0
\end{pmatrix}\\*[2mm]
 = \theta A_{m+1,0}\cdot
 \det\left(\tilde{w}_1(s_m) \pi_{1}^m(s_m)\;\cdots\; \tilde{w}_1(s_1) \pi_{1}^m(s_1)\right),
\end{multline*}
which finishes the proof.
\end{proof}

\begin{rem}
In the Jacobi case the multidimensional integral (\ref{DAV}) can be recognized as a variant of the Dixon--Anderson
integral \cite{Di05,An91}, well known in the theory of the Selberg integral, and also in the theory
of $\beta$-ensembles in random matrix theory \cite[Section 4.2]{Fo10}. Specifically, in the
statement of the Dixon--Anderson integral given in \cite[Eq.~(4.15)]{Fo10}, cf. \cite[Eq.~(6)]{Di05}, that is, 
\[
\int_{x_1}^{x_0}d\xi_1 \cdots \int_{x_{\hat m}}^{x_{\hat{m}-1}}d\xi_{\hat{m}} \, \Delta(\xi_{\hat{m}},\ldots,\xi_1) 
\prod_{j=1}^{\hat{m}}\prod_{k=0}^{\hat{m}} |\xi_j - x_k|^{a_k-1} = \frac{\prod_{i=0}^{\hat{m}}\Gamma(a_i)}{\Gamma(\sum_{i=0}^{\hat{m}} a_i)} \prod_{0\leq j < k \leq \hat{m}} (x_j-x_k)^{a_j+a_k-1},
\]
valid for $x_0 > x_1 > \cdots > x_{\hat{m}}$ and $a_j > 0$ ($j=0,\ldots,\hat{m}$),
we can reclaim the Jacobi case of (\ref{DAV}) by the following substitutions of variables and choices of parameters:
\[
x_0 = 1,\quad x_j = s_j^2, \quad \xi_j = t_j^2 \quad (j=1,\ldots,\hat{m}); \qquad a_0 = a+1, \quad a_j = 1 \quad (j=1,\ldots,m),
\]
and $a_{m+1} = 1/2$, $s_{m+1}=0$ if $\mu=1$.
\end{rem}

\subsection{Integrating out the even singular values} The following second corollary of Lemma~\ref{lem:1} will allow us to integrate out the \emph{even} singular values from the density $q(s;t)$.

\begin{cor}\label{cor:even_out} Let $g_\mu$, $\tilde g_\mu$ be as in \eqref{eqn:gmu} and put $t_{m+1}=0$ if $\mu=0$.
Then, there holds
\[
\int_{t_2}^{t_1} ds_1 \cdots \int_{t_{m+1}}^{t_m} ds_{m}\, g_{\mu}(s_1,\ldots,s_m)
= A_{m,\mu}\det
\begin{pmatrix}
\tilde w_1(t_{\hat m}) \pi_{1-\mu}^{\hat{m}-1}(t_{\hat{m}}) &\cdots & \tilde w_1(t_1)\pi_{1-\mu}^{\hat{m}-1}(t_1) \\*[2mm]
\theta_{1-\mu}(t_{\hat{m}})  & \cdots & \theta_{1-\mu}(t_1)
\end{pmatrix}
\]
for $\mu=0,1$ with $\theta_0(x)=1$ and $$\theta_1(x) =  \displaystyle\int_0^{x} w_1(\xi)\,d\xi.$$
Here, all integration bounds are within $(0,\omega)$ and, in the case of a Cauchy weight,  $n=2m+\mu \leq \kappa+2$.
\end{cor}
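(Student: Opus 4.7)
The plan is to mirror the argument used for Corollary~\ref{cor:2}, but now integrating out the $s$-variables with \emph{interior} integration limits $t_1,\dotsc,t_{m+1}$ rather than the endpoints $0$ and $\omega$. The first step is to rewrite
\[
g_{\mu}(s_1,\ldots,s_m) = \det\left(w_1(s_m)\pi_\mu^m(s_m)\;\cdots\; w_1(s_1)\pi_\mu^m(s_1)\right)
\]
via \eqref{eqn:gmu_as_det}. After the reindexing $\xi_j = s_{m+1-j}$, $x_j = t_{m+2-j}$ for $j=1,\dotsc,m+1$, the interlacing chain $t_{j+1}\le s_j\le t_j$ becomes $x_j\le\xi_j\le x_{j+1}$, and the iterated integral on the LHS fits the template of Lemma~\ref{lem:1} exactly, with $n=m$ and $\nu=\mu$ (the Cauchy order constraint $2m+\mu\le\kappa+2$ is precisely the condition imposed in the corollary). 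Applying Lemma~\ref{lem:1} produces the $(m+1)\times(m+1)$ determinant
\[
A_{m,\mu}\det\begin{pmatrix}\tilde w_1(t_{m+1})\pi_{\mu-1}^m(t_{m+1})&\cdots&\tilde w_1(t_1)\pi_{\mu-1}^m(t_1)\\ 1&\cdots&1\end{pmatrix}.
\]

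In the case $\mu=1$ nothing further is required: since $\hat m=m+1$, $1-\mu=0$, $\pi_0^{\hat m-1}=\pi_0^m$, and $\theta_0\equiv 1$, this matrix is already precisely the one on the RHS of the statement.

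The case $\mu=0$ is the substantive one and relies on the boundary specification $t_{m+1}=0$. From the very definition of $\psi$ one reads off $\psi(0)=0$, hence every entry of $\pi_{-1}^m(0)$ vanishes and the first column of the matrix collapses to $(0,\dotsc,0,1)^\top$. Laplace expansion along this column deletes the ``$1$''-row and leaves an $m\times m$ determinant whose first row has entries $\tilde w_1(t_k)\psi(t_k)$. But $\tilde w_1=\phi w_1$ combined with the definition of $\psi$ gives the identity $\tilde w_1(x)\psi(x)=-\theta_1(x)$, of which \eqref{eqn:thetalim} is just the boundary value, so this first row becomes $-\theta_1(t_k)$, while the remaining rows reassemble into the columns $\tilde w_1(t_k)\pi_1^{m-1}(t_k)$. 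Cycling this top row to the bottom and extracting the overall sign then yields the desired form. The main obstacle, and really the only delicate step, is the sign bookkeeping: the cofactor sign $(-1)^m$, the factor $-1$ from $\tilde w_1\psi=-\theta_1$, and the $(-1)^{m-1}$ from the cyclic row permutation telescope to $+1$, leaving exactly the matrix on the RHS of the statement with $\hat m=m$ and $1-\mu=1$.
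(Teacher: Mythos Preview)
Your proof is correct and follows essentially the same route as the paper's: both write $g_\mu$ via \eqref{eqn:gmu_as_det}, apply Lemma~\ref{lem:1} to obtain the $(m{+}1)\times(m{+}1)$ determinant, observe that $\mu=1$ is immediate, and for $\mu=0$ use $\tilde w_1(0)\pi_{-1}^m(0)=0$ to collapse the first column and then rearrange rows. Your explicit identification $\tilde w_1\psi=-\theta_1$ and the accompanying sign accounting are exactly what the paper leaves implicit in its final displayed matrix (whose last row is $-\tilde w_1(t_k)\psi(t_k)$).
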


\begin{proof} Using (\ref{eqn:gmu_as_det}) and Lemma~\ref{lem:1} we obtain
\begin{multline*}
\int_{t_2}^{t_1} ds_1 \cdots \int_{t_{m+1}}^{t_m} ds_{m}\, g_{\mu}(s_1,\ldots,s_m)\\*[2mm] =
\int_{t_2}^{t_1} ds_1 \cdots \int_{t_{m+1}}^{t_m} ds_{m}\,
\det\left(w_1(s_m) \pi^m_\mu(s_m)\;\cdots\; w_1(s_1)\pi^m_\mu(s_1)\right)\\*[2mm]
= A_{m,\mu}\det
\begin{pmatrix}
\tilde w_1(t_{m+1}) \pi_{\mu-1}^m(t_{m+1}) & \cdots & \tilde w_1(t_1)\pi_{\mu-1}^m(t_1) \\*[1mm]
1 & \cdots & 1
\end{pmatrix},
\end{multline*}
which is already the assertion for $\mu=1$. For $\mu=0$, the assertion follows from further calculating
\begin{multline*}
\det
\begin{pmatrix}
\tilde w_1(t_{m+1}) \pi_{-1}^m(t_{m+1}) & \tilde{w}_1(t_m) \pi_{-1}^m(t_{m}) & \cdots & \tilde w_1(t_1)\pi_{-1}^m(t_1) \\*[1mm]
1 & 1 & \cdots & 1
\end{pmatrix}\\*[2mm]
=
\det
\begin{pmatrix}
0 & \tilde w_1(t_m) \pi_{-1}^m(t_{m}) & \cdots & \tilde w_1(t_1)\pi_{-1}^m(t_1) \\*[1mm]
1 & 1 & \cdots & 1
\end{pmatrix}
\end{multline*}
\begin{multline*}
= (-1)^m \det\left(\tilde w_1(t_m) \pi^m_{-1}(t_m)\;\cdots\; \tilde{w}_1(t_1)\pi_{-1}^m(t_1)\right)\\*[2mm]
 =
(-1)^m\det
\begin{pmatrix}
\tilde{w}_1(t_m) \psi(t_m) & \cdots & \tilde w_1(t_1)\psi(t_1)  \\*[1mm]
\tilde{w}_1(t_m) \pi_{1}^{m-1}(t_m) & \cdots & \tilde{w}_1(t_1)\pi_{1}^{m-1}(t_1)
\end{pmatrix}\\*[2mm]
= \det
\begin{pmatrix}
\tilde{w}_1(t_m) \pi_{1}^{m-1}(t_m) & \cdots & \tilde{w}_1(t_1)\pi_{1}^{m-1}(t_1)\\*[1mm]
-\tilde{w}_1(t_m) \psi(t_m) & \cdots & -\tilde w_1(t_1)\psi(t_1)  \\*[1mm]
\end{pmatrix}
\end{multline*}
which finishes the proof.
\end{proof}
Now, by means of this corollary, the marginal density of the odd singular values is given as  
\begin{multline*}
q_{\text{odd}}(t_1,\ldots,t_{\hat{m}}) =
c_n A_{m,\mu} \cdot g_{1-\mu}(t_{\hat{m}},\ldots,t_1) \cdot
\det
\begin{pmatrix}
\tilde w_1(t_{\hat m}) \pi_{1-\mu}^{\hat{m}-1}(t_{\hat{m}}) &\cdots & \tilde w_1(t_{\hat m})\pi_{1-\mu}^{\hat{m}-1}(t_1) \\*[2mm]
\theta_{1-\mu}(t_{\hat{m}})  & \cdots & \theta_{1-\mu}(t_1)
\end{pmatrix}\\*[2mm]
=
c_n A_{m,\mu} \cdot 
\det
\begin{pmatrix}
\tilde w_1(t_{\hat m}) \pi_{1-\mu}^{\hat{m}-1}(t_{\hat{m}}) &\cdots & \tilde w_1(t_{\hat m})\pi_{1-\mu}^{\hat{m}-1}(t_1) \\*[2mm]
\gamma_{1-\mu}(t_{\hat{m}})  & \cdots & \gamma_{1-\mu}(t_1)
\end{pmatrix}\\*[2mm]
\cdot
\det
\begin{pmatrix}
\tilde w_1(t_{\hat m}) \pi_{1-\mu}^{\hat{m}-1}(t_{\hat{m}}) &\cdots & \tilde w_1(t_{\hat m})\pi_{1-\mu}^{\hat{m}-1}(t_1) \\*[2mm]
\theta_{1-\mu}(t_{\hat{m}})  & \cdots & \theta_{1-\mu}(t_1)
\end{pmatrix}\qquad (\omega\geq t_1 \geq \cdots \geq t_{\hat{m}} \geq 0)
\end{multline*}
with
\[
\gamma_\mu(x) = \tilde w_1(x) x^{\mu+2\hat{m}-2},\qquad
\theta_\mu(x) = 
\begin{cases}
1 &  \quad\text{if $\mu=0$,} \\*[2mm]
 \displaystyle\int_0^x w_1(\xi)\,d \xi & \quad\text{if $\mu=1$.}
\end{cases}
\]
Note that the two determinantal factors differ just in their last rows. It is this difference that prevents the expression from
becoming a perfect square, which is in marked contrast with the marginal density \eqref{eqn:even_density} of the even singular values. 

\begin{table}[tbp]
\caption{companion weights $\tilde w_1(x)$ and integrals $\theta_1(x)$}
{\small\begin{center}
\begin{tabular}{lccc}
type & $w_1(x)$ & $\tilde w_1(x)$ & $\theta_1(x)$\\*[1mm]\hline
Gauss$\phantom{\Bigg|}$  & $e^{-x^2/2}$& $e^{-x^2/2}$ & $\sqrt{\dfrac{\pi}{2}}\erf\left(\dfrac{x}{\sqrt{2}}\right)$\\
Jacobi$\phantom{\Bigg|}$ & $(1-x^2)^a$ & $(1-x^2)^{a+1}$ & $x \cdot{}_2 F_1\!\left(\dfrac12,-a;\dfrac32;x^2\right)$\\
Cauchy$\phantom{\Bigg|}$ & $(1+x^2)^{-a-1}$ & $(1+x^2)^{-a}$ & $x \cdot{}_2 F_1\!\left(\dfrac12,a+1;\dfrac32;-x^2\right)$\\*[1mm]\hline
\end{tabular}
\end{center}}
\label{tab:theta_function}
\end{table}%


\section{Gap Probabilities}\label{section:gapprobabilities}

\subsection{A corollary of Theorem~\ref{thm:main1}}
Theorem~\ref{thm:main1} has an interesting implication in terms of gap probabilities, a notion that we recalled in Section~\ref{section:revision}. Specifically, we get the following result:
\begin{thm}\label{thm:gap} The gap probabilities of the Gauss, symmetric Jacobi or Cauchy case of Table~\ref{tab:admissible1} of
order $n=2m+\mu$ satisfy
\[
E_{n,1}(2k+\mu-1;(-s,s);w_1) + E_{n,1}(2k+\mu;(-s,s);w_1) = E_{m,2}(k;(0,s^2);x^{\mu-\frac12}w_2(x^{1/2})\chi_{x>0}).
\]
\end{thm}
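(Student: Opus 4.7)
The plan is to combine Theorem~\ref{thm:main1} with a square-root change of variables and an elementary counting argument.

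First, I would rewrite the right-hand side in terms of a chiral ensemble. Under the substitution $y_k=x_k^2$, the joint density of $\chUE_m(x^{2\mu}w_2)$ --- proportional to $\prod_k x_k^{2\mu}w_2(x_k)\,\Delta(x_1^2,\ldots,x_m^2)^2$ on the positive orthant, cf.\ \eqref{eqn:chiraldensity} --- transforms into a density proportional to $\prod_k y_k^{\mu-1/2}w_2(y_k^{1/2})\,\Delta(y_1,\ldots,y_m)^2$ on $(0,\infty)^m$, which is exactly the joint density of $\UE_m\bigl(x^{\mu-1/2}w_2(x^{1/2})\chi_{x>0}\bigr)$. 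Since $0<y<s^2$ corresponds to $0<x<s$, the right-hand side equals $\Pr\bigl[\#\{X\in\chUE_m(x^{2\mu}w_2):X<s\}=k\bigr]$.

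Second, Theorem~\ref{thm:main1} gives $\even|\OE_n(w_1)|\overset{\rm d}{=}\chUE_m(x^{2\mu}w_2)$, so this probability in turn equals the probability that the even-decimated singular values of $\OE_n(w_1)$ have exactly $k$ elements strictly below $s$.

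Third, I would translate this back to a gap probability of $\OE_n(w_1)$ itself. Let $\ell$ denote the number of eigenvalues of $\OE_n(w_1)$ in $(-s,s)$, which also equals the number of singular values $\sigma_i$ with $\sigma_i<s$; equivalently, $\sigma_\ell<s\leq\sigma_{\ell+1}$ in the ordering $\sigma_1\leq\cdots\leq\sigma_n$. The "2nd largest, 4th largest, $\ldots$" convention places the even-decimated ensemble at positions $\{1,3,\ldots,2m-1\}$ when $\mu=0$ and $\{2,4,\ldots,2m\}$ when $\mu=1$, or uniformly at $\{2j+\mu-1:j=1,\ldots,m\}$. The number of these positions lying in $\{1,\ldots,\ell\}$ is $\lceil\ell/2\rceil$ for $\mu=0$ and $\lfloor\ell/2\rfloor$ for $\mu=1$; requiring that count to equal $k$ forces $\ell\in\{2k-1,2k\}$ and $\ell\in\{2k,2k+1\}$, respectively, i.e.\ uniformly $\ell\in\{2k+\mu-1,2k+\mu\}$. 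Summing $\Pr[\ell=2k+\mu-1]$ and $\Pr[\ell=2k+\mu]$ then produces the left-hand side, with the convention $E_{n,1}(-1;\cdot;\cdot)\equiv 0$ absorbing the $k=0$, $\mu=0$ edge case.

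The only delicate point is bookkeeping: getting the "2nd largest vs.\ 2nd smallest" orientation right and matching the parity of the total count $\ell$ to the parity $\mu$ of $n$. Beyond Theorem~\ref{thm:main1} and the Jacobian of $y=x^2$, no further analytic input is required.
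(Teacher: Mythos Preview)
Your proposal is correct and follows essentially the same route as the paper: the square-root change of variables giving $E_{m,2}(k;(0,s);\chUE_m(x^{2\mu}w_2))=E_{m,2}(k;(0,s^2);x^{\mu-1/2}w_2(x^{1/2})\chi_{x>0})$, Theorem~\ref{thm:main1}, and the counting argument showing that exactly $k$ even-decimated singular values lie in $(0,s)$ iff exactly $2k+\mu-1$ or $2k+\mu$ singular values do. Your bookkeeping on the positions $\{2j+\mu-1:j=1,\ldots,m\}$ and the resulting $\lceil\ell/2\rceil$ versus $\lfloor\ell/2\rfloor$ count is a slightly more explicit rendering of what the paper condenses into ``by looking at pairs of consecutive values.''
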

\begin{proof}
The change of variables $x_k \mapsto \tilde x_k = \sqrt{x_k}$, applied to the
joint density $p_\text{ch}$ of the chiral ensemble $\chUE(x^{2\mu}w_2(x))$ yields
\[
p_\text{ch}(x_1,\ldots,x_m) \,dx_1\cdots dx_m = p_{m,2}(\tilde x_1,\ldots,\tilde x_m) \,d\tilde x_1 \cdots d\tilde x_m
\]
where $p_{m,2}$ is the density of $\UE(x^{\mu-\frac12} w_2(x^{1/2}) \chi_{x>0})$. Hence, lifted to gap probabilities, we obtain
\begin{equation}\label{eqn:chUE}
E_{m,2}(k;(0,s);\chUE(x^{2\mu }w_2)) = E_{m,2}(k;(0,s^2);x^{\mu-\frac12}w_2(x^{1/2})\chi_{x>0})\qquad (\mu=0,1).
\end{equation}
By looking at pairs of consecutive values it is easy to see that the event that exactly $k$ values of the decimated ensemble $\even|\OE_{n}(w_1)|$, $n=2m+\mu$, are contained in $(0,s)$ is given by the union of the events that
exactly $2k+\mu-1$ or that exactly $2k+\mu$ values of $|\OE_{n}(w_1)|$ are in that interval. Since these two events are {\em mutually exclusive}
and since the singular values of $\OE_n$ contained in $(0,s)$ correspond to the eigenvalues
in $(-s,s)$, we thus get from \eqref{eqn:even_aGUE}
and (\ref{eqn:chUE}) proof of 
\begin{multline}\label{eqn:gap}
E_{n,1}(2k+\mu-1;(-s,s);w_1) + E_{n,1}(2k+\mu;(-s,s);w_1)  = E_{m,2}(k;(0,s);\chUE(x^{2\mu }w_2))  \\*[2mm]
= E_{m,2}(k;(0,s^2);x^{\mu-\frac12}w_2(x^{1/2})\chi_{x>0}),
\end{multline} 
which finishes the proof.
\end{proof}

For even order ($\mu=0$), a first proof of this theorem was given by Forrester \cite[Eq.~(1.14)]{EvenSymm}
using generating functions, Pfaffians and Fredholm determinants. For Gaussian weights, Bornemann and La Croix \cite[Eq.~(40)]{BLC}
recently settled the odd order case by using the more elementary techniques similar to this paper.

\subsection{Alternative derivation of Theorem \ref{thm:gap}}

A natural question is to enquire if the proof of Theorem \ref{thm:gap} for $\mu=0$ given in \cite{EvenSymm} can be extended to the case $\mu = 1$.
Here we will show that the answer is yes, although as is usual for methods based on Pfaffians in the study of random matrix ensembles with $\beta =1$
(see e.g.~\cite[Section 6.3.3]{Fo10}),
the number of eigenvalues being odd adds to the complexity of the calculation.

The first step is to introduce the generating function of the gap probabilities $\{ E_{n,\beta}(k;J;w_\beta) \}$ according to
$$
E_{n,\beta}(J;\xi;w_\beta) = \sum_{k=0}^\infty (1 - \xi)^k E_{n,\beta}(k;J;w_\beta).
$$
The generating function can be expressed as the multidimensional integral (see e.g.~\cite[Prop.~8.1.2]{Fo10})
\begin{equation}\label{E}
E_{n,\beta}(J;\xi;w_\beta)  = 
\int_{-\omega}^\omega dx_1 \cdots  \int_{-\omega}^\omega dx_n \, \prod_{j=1}^n (1 - \xi \chi_{x_j \in J} ) \cdot p_\beta(x_1,\dots, x_n).
\end{equation}
In terms of generating functions, the assertion of Theorem  \ref{thm:gap}  in the case $\mu = 1$ is equivalent to
\begin{multline}\label{B1}
\bigg ( {1 \over (2k)!} {\partial^{2k} \over \partial \xi^{2k} }-
{1 \over (2k+1)!} {\partial^{2k+1} \over \partial \xi^{2k+1} } \bigg )
E_{2m+1,1}((-s,s);\xi; w_1(x)) \Big |_{\xi = 1} \\ =  {(-1)^k \over k!} {\partial^{k} \over \partial \xi^{k} }  E_{m,2}((0,s^2); \xi;x^{1/2} w_2(x^{1/2}) \chi_{x > 0})  \Big |_{\xi = 1}
\end{multline}
being valid for the weights in Table \ref{tab:admissible1}. It is this identity that we prove in the rest of the section. 

By making use of Pfaffians, (\ref{E}) for $\beta = 1$ and $w_1(x)$ even can be expressed as a determinant.

\begin{lemma}
Let $R_j(x)$ be a polynomial of degree $j$ for each $j=0,1,\dots$, and furthermore require that $R_j(x)$ be even (odd) for $j$ even (odd).
For $w_1(x)$ even we have
\begin{equation}\label{Pr}
E_{2m+1,1}((-s,s);\xi;w_1) \;\propto\; \det Y,
\end{equation}
where
\begin{equation}\label{Y}
Y = \Big ( [ a_{2j-1,2k} ]_{j=1,\dots,m+1 \atop k = 1,\dots,m} \: [b_{2j-1} ]_{j=1,\dots,m+1} \Big )
\end{equation}
with
\begin{align}\label{0.7}
a_{j,k} = & {1 \over 2} \int_{-\omega}^\omega dx \, w_1(x) (1 - \xi \chi_{x \in (-s,s)})
\int_{-\omega}^\omega dy \, w_1(y) (1 - \xi \chi_{y \in (-s,s)}) \, \nonumber \\*[2mm]
& 
\times R_{j-1}(x) {\rm sgn} \, (y-x)  R_{k-1}(y),  \nonumber \\*[3mm]
b_j  = & {1 \over 2}  \int_{-\omega}^\omega  w_1(x)(1 - \xi \chi_{x \in (-s,s)}) R_{j-1}(x) \, dx.
\end{align}
The proportionality in (\ref{Pr}) is such that the RHS is equal to unity when $\xi = 0$.
\end{lemma}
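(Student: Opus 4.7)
The plan is to derive (\ref{Pr}) from the standard Pfaffian representation of the $\beta=1$ generating function, and then collapse the Pfaffian to $\det Y$ by invoking the evenness assumptions. Since each $R_j$ is a polynomial of exact degree $j$, the family $\{R_0,\dotsc,R_{2m}\}$ is a basis of the polynomials of degree $\leq 2m$, so that $\Delta(x_1,\dotsc,x_{2m+1})$ equals a nonzero constant times $\det[R_{j-1}(x_k)]_{j,k=1}^{2m+1}$. Substituting this into \eqref{E}, and applying the standard de Bruijn identity in the form used for odd order (see e.g.\ \cite[\S6.3]{Fo10}), where one extra row and column are appended to obtain an antisymmetric matrix of even size, produces
\[
E_{2m+1,1}((-s,s);\xi;w_1) \;\propto\; \mathrm{Pf}\,Z,\qquad
Z = \begin{pmatrix} [a_{j,k}]_{j,k=1}^{2m+1} & [b_{j}]_{j=1}^{2m+1} \\ -[b_{k}]_{k=1}^{2m+1} & 0 \end{pmatrix},
\]
with $a_{j,k}$ and $b_j$ the bilinear and linear forms of \eqref{0.7}.

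Next I would perform the parity analysis. Under $(x,y)\mapsto(-x,-y)$ the weight $w_1$ and the indicator factor $(1-\xi\chi_{\cdot\in(-s,s)})$ are invariant, the sign $\mathrm{sgn}(y-x)$ is reversed, and $R_{j-1}(x)R_{k-1}(y)$ is multiplied by $(-1)^{j+k}$. Hence $a_{j,k}=(-1)^{j+k-1}a_{j,k}$, forcing $a_{j,k}=0$ whenever $j+k$ is even. An analogous one-variable argument shows that $b_j=0$ whenever $j$ is even. The only surviving entries of the block $[a_{j,k}]$ are therefore those with exactly one of $j,k$ odd, and only the odd-indexed $b_j$ survive.

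The third step is combinatorial. Permuting rows and columns of $Z$ into the order $(1,3,\dotsc,2m+1;\,2,4,\dotsc,2m;\,2m+2)$ and grouping the augmentation column with the even-index block reveals the block structure
\[
Z' = \begin{pmatrix} 0_{(m+1)\times(m+1)} & Y \\ -Y^{\top} & 0_{(m+1)\times(m+1)} \end{pmatrix},
\]
with $Y$ precisely the matrix displayed in \eqref{Y}. For any antisymmetric matrix of this shape one has $\mathrm{Pf}\,Z'=\pm\det Y$, so $\mathrm{Pf}\,Z\propto\det Y$ and hence $E_{2m+1,1}((-s,s);\xi;w_1)\propto\det Y$. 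The normalization clause is automatic: at $\xi=0$, both sides are nonzero (the left-hand side equals $1$ by definition of the generating function), and the proportionality constant is fixed accordingly.

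The main obstacle is bookkeeping rather than conceptual: tracking the constant from expressing $|\Delta|$ via $\det[R_{j-1}(x_k)]$, the de Bruijn constant for the odd-order Pfaffian (which requires the augmentation), and the sign produced by the row/column permutation that brings $Z$ into block antisymmetric form. None of these are deep, but the odd-$n$ Pfaffian is noticeably more delicate than its even counterpart, so some care is required to ensure the final proportionality is $1$ at $\xi=0$.
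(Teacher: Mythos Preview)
Your proposal is correct and follows essentially the same route as the paper: the de Bruijn Pfaffian for odd order, the parity vanishing of $a_{j,k}$ (same-parity indices) and $b_j$ ($j$ even), and the row/column reordering into the block form $\begin{psmallmatrix}0&Y\\-Y^{\top}&0\end{psmallmatrix}$ with $\mathrm{Pf}=\pm\det Y$. The only cosmetic difference is that the paper makes the intermediate factorisation $|\Delta|\propto\det[R_{k-1}(x_j)]\cdot\mathrm{Pf}\,X$ explicit before invoking the integration identity, whereas you cite de Bruijn directly.
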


\noindent
\begin{proof} Let $h(x,y) = - h(y,x)$ and set
\begin{equation}\label{X}
X = \left ( \begin{array}{cc}  [h(x_j,x_k)]_{j,k=1,\dots,2m+1} &
[F(x_j)]_{j=1,\dots,2m+1}
\\*[2mm] - [F(x_k)]_{k=1,\dots,2m+1} & 0
\end{array}
\right ).
\end{equation}
It is well known (see e.g.~\cite[Eq.~(6.81)]{Fo10}) that with $h(x,y) = {1 \over 2} {\rm sgn}(y-x)$
and $F(x) = {1 \over 2}$
we have
\begin{equation}\label{1.3a}
{\rm Pf}\,  X = 2^{-(m+1)}\prod_{1 \le j < k \le
2m+1} {\rm sgn} (x_k - x_j),
\end{equation}
where Pf denotes the Pfaffian. Also, it is a simple corollary of the Vandermonde determinant identity that
\begin{equation}\label{1.3b}
\det [R_{k-1}(x_j)]_{j,k=1,\dots,2m+1} \;\propto \prod_{1 \le j < k \le 2m+1} (x_k - x_j).
\end{equation}
Combining (\ref{1.3a}) and (\ref{1.3b}) shows
\begin{equation}\label{1.3c}
\prod_{1 \le j < k \le
2m+1} |x_k - x_j| \;\propto\; \det [R_{k-1}(x_j)]_{j,k=1,\dots,2m+1}\; {\rm Pf} \, X.
\end{equation}

The significance of the decomposition (\ref{1.3c}) for present purposes is that it implies a Pfaffian formula for the generating function
$E_{2m+1,1}$. Specifically, substituting  the definition (\ref{eqn:goedensity}) of the joint density $p_1(x_1,\ldots,x_{2m+1})$ in (\ref{E}) with $\beta  = 1$, then substituting (\ref{1.3c}) we have
\begin{multline}\label{1.3d}
E_{2m+1,1}((-s,s);\xi;w_1) \;\propto\;
\int_{-\omega}^\omega dx_1  \cdots
\int_{-\omega}^\omega dx_{2m+1} \,  \\*[1mm]
\times \prod_{l=1}^{2m+1}(1 - \xi \chi_{x_l \in (-s,s)}) w_1(x_l)
\det [R_{k-1}(x_j)]_{j,k=1,\dots,2m+1} \;{\rm Pf} \, X \\*[2mm]
\;\propto\;
{\rm Pf} \,  \left ( \begin{array}{cc}
[a_{j,k}]_{j,k=1,\dots,2m+1} & [b_j]_{j=1,\dots,2m+1} \\*[2mm]
- [b_k]_{k=1,\dots,2m+1} & 0 \end{array} \right ),
\end{multline}
where $a_{j,k}$, $b_j$ are given by (\ref{0.7}), with the final line being a well known identity in random matrix theory
\cite{dBr55}, \cite[Eq.~(6.84)]{Fo10}.

Finally, to obtain from this the determinant form (\ref{Pr}), note that since $(1 - \xi \chi_{x \in (-s,s)}) w_1(x)$ is even in $x$, and $R_j(x)$ is even (odd) for $j$ even
(odd), we have that $a_{j,k}=0$ when $j,k$ have the same parity, and $b_j=0$ for $j$ even. Thus the nonzero entries in the Pfaffian (\ref{1.3d}) form
a checkerboard pattern. Taking into consideration that $a_{j,k}$ is antisymmetric in the indices $j,k$, rearranging the rows reduces the RHS of (\ref{1.3d}) to
$$
{\rm Pf} \left ( \begin{array}{cc}  0_{m+1} &  Y \\*[2mm]
- Y^T &  0_{m+1}  \end{array} \right ) ,
$$
and this  in turn is equal to $\det Y$. 
\end{proof}

\medskip
At this stage the polynomials $\{R_j(x)\}$, apart from their degree and parity are arbitrary --- a judicious choice takes us closer
to establishing (\ref{B1}). For this, for a given $w_2(x)$ in Table \ref{tab:admissible1}, introduce the family of orthonormal polynomials
$\{ p_j(x) \}_{j=0,\dots,n}$ such that
\begin{equation}\label{OP}
\int_{-\omega}^\omega w_2(x) p_j(x) p_k(x) \, dx = \delta_{jk}.
\end{equation}
In terms of these polynomials, and the pairs of weights as implied by Table \ref{tab:admissible1}, choose
\begin{equation}\label{R}
R_0(x) = 1, \quad R_{2j-1}(x) = p_{2j-1}(x), \quad R_{2j}(x) = - {1 \over w_1(x)} {d \over dx} \left( {w_2(x) \over w_1(x)} p_{2j-1}(x)\right),
\end{equation}
where $j=1,2,\ldots$.
The latter expression is even and a polynomial of degree $j$ since
\[
{1 \over w_1(x)} {d \over dx} {w_2(x) \over w_1(x)} = - {x \over \alpha_1}, \qquad {w_2(x) \over w_1(x)^2} = \phi(x) = \text{even polynomial of degree $2$}, 
\]
following from Eqs. \eqref{eqn:ode2} and \eqref{eqn:w2}, which are constitutive for Table \ref{tab:admissible1}.
We then have, for $j,k=1,2\dots$,
\begin{equation}\label{2.18}
b_1  = {1 \over 2} \int_{-\omega}^\omega w_1(x) (1 - \xi \chi_{x \in (-s,s)}) \, dx, \qquad
b_{2j+1}   =  \xi {w_2(s) \over w_1(s)} p_{2j-1}(s), 
\end{equation}
and
\[
a_{1,2k} = {1 \over 2} \int_{-\omega}^\omega  dx \, w_1(x)  (1 - \xi \chi_{x \in (-s,s)}) 
 \,  \int_{-\omega}^\omega dy \, w_1(y)  (1 - \xi \chi_{y \in (-s,s)}) {\rm sgn} \, (y-x) p_{2k-1}(y),
\]
as well as
\begin{multline}\label{2.18a}
a_{2j+1,2k} = 2 \xi {w_2(s) \over w_1(s)} p_{2j-1}(s) \int_s^\omega w_1(y) p_{2k-1}(y) \, dy \\*[2mm]
- \int_{-\omega}^\omega w_2(y) (1 - \xi \chi_{y \in (-s,s)})^2 p_{2j-1}(y) p_{2k-1}(y) \, dy.  
\end{multline}

One immediate consequence of this choice is that it allows for a simple determination of the proportionality, $1/\theta$ say, in
(\ref{Pr}). Thus with $\xi = 0$ we see that $b_{2j+1} = 0$ and $a_{2j+1,2k} = - \delta_{j,k}$, where to obtain the latter use has
been made of (\ref{OP}). Consequently, cf. \eqref{eq:theta},
\begin{equation}\label{N}
\theta = b_1 |_{\xi = 0} = {1 \over 2} \int_{-\omega}^\omega w_1(x) \, dx.
\end{equation}

Let $C \in O(m)$ be a real \emph{orthogonal} matrix, and define a set $\{q_{2j-1}(x) \}_{j=1,\ldots,m}$ of polynomials by
\begin{equation}\label{q}
\begin{pmatrix}
q_1(x)\\*[1mm]
q_3(x)\\*[1mm]
\vdots\\*[1mm]
q_{2m-1}(x)
\end{pmatrix} = C \begin{pmatrix}
p_1(x)\\*[1mm]
p_3(x)\\*[1mm]
\vdots\\*[1mm]
p_{2m-1}(x)
\end{pmatrix} .
\end{equation}
If $\tilde{Y}$ is defined as for $Y$ but with each occurrence of $p_{2j-1}(x)$ replaced by $q_{2j-1}(x)$,
we get
\begin{equation}\label{Yt}
\begin{pmatrix} 1 &  \\
 & C \end{pmatrix} Y  \begin{pmatrix} C^T & \\
 & 1 \end{pmatrix} = \tilde{Y},\qquad \det \tilde Y = \det Y,
 \end{equation}
 where the latter follows from $|\!\det C| = 1$. This allows us to make the same replacement in (\ref{R}) and thus in (\ref{2.18}) and (\ref{2.18a}) without effecting 
 the representation (\ref{Pr}) of the generating function. That this freedom leads to simplifications can be seen
 from the fact that $\{q_{2j-1}(x) \}$ remains an orthonormal set with respect to the inner product implied by (\ref{OP}), that is,
 \begin{equation}\label{OP1}
\int_{-\omega}^\omega w_2(x) q_{2j-1}(x) q_{2k-1}(x) \, dx = \delta_{jk},
\end{equation}
 but can also be
 chosen to have an additional orthogonality as in the following lemma.
 
 \begin{lemma}
 Define the projection kernel
 \begin{equation}\label{Km}
 K(x,y) = (w_2(x) w_2(y))^{1/2} \sum_{k=1}^m p_{2k-1}(x) p_{2k-1}(y)
 \end{equation}
 together with the associated integral operator
 \begin{equation}\label{Km1}
 K f(x) = \int_{-s}^s K(x,y) f(y) \, dy\qquad (0<s<\omega). 
  \end{equation}
  This integral operator has eigenfunctions $\{ q_{2j-1}(x) \}_{j=1,\dots,m}$ with the structure (\ref{q})
  for some real orthogonal matrix $C$, and furthermore
   \begin{equation}\label{Km2}
\int_{-s}^s  w_2(x) {q}_{2j-1}(x) {q}_{2k-1}(x) \, dx = \nu_{2j-1}(s) \delta_{jk},
\end{equation}
where $0 < \nu_{2j-1}(s)  < 1$ are the eigenvalues of $K$.       
\end{lemma}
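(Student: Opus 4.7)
The plan is to exploit the \emph{finite rank} of the kernel $K(x,y)$: as a sum of $m$ separable terms, its range is $m$-dimensional. Every eigenfunction with nonzero eigenvalue therefore lies in the span of $\sqrt{w_2(x)}\,p_{2k-1}(x)$, $k=1,\ldots,m$. Substituting the ansatz
\begin{equation*}
f(x)=\sqrt{w_2(x)}\sum_{j=1}^{m} c_j\,p_{2j-1}(x)
\end{equation*}
into $K f = \nu f$ and reading off coefficients of the linearly independent functions $\sqrt{w_2(x)}\,p_{2k-1}(x)$, the eigenvalue equation collapses to the finite-dimensional linear system
\begin{equation*}
\sum_{j=1}^{m} M_{kj}\,c_j = \nu\,c_k,\qquad M_{kj} = \int_{-s}^{s} w_2(x)\,p_{2k-1}(x)\,p_{2j-1}(x)\,dx,
\end{equation*}
so that $c$ is an eigenvector of the real symmetric $m\times m$ matrix $M=(M_{kj})$.

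By the spectral theorem there exists $C\in O(m)$ with $CMC^{T} = \diag(\nu_1,\nu_3,\ldots,\nu_{2m-1})$. Using this $C$ in (\ref{q}) produces polynomials $q_{2j-1}$ such that $\sqrt{w_2(x)}\,q_{2j-1}(x)$ are the required eigenfunctions; orthogonality of $C$ combined with (\ref{OP}) automatically preserves the full-interval orthonormality (\ref{OP1}), while the restricted identity (\ref{Km2}) is precisely $CMC^{T}=\diag(\nu_{2j-1})$ rewritten as an integral.

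The only part that needs a little more care, and the step I expect to be the main obstacle, is the strict two-sided bound $0<\nu_{2j-1}(s)<1$. Both inequalities rest on the observation that the polynomials $p_1,p_3,\ldots,p_{2m-1}$, having distinct degrees, are linearly independent as functions on any set of positive Lebesgue measure. Applied to $(-s,s)$, this makes the quadratic form $c^{T}Mc=\int_{-s}^s w_2(x)(\sum_j c_j p_{2j-1}(x))^2\,dx$ strictly positive whenever $c\neq 0$, so $M$ is strictly positive definite and $\nu_{2j-1}>0$. For the upper bound I introduce the complementary matrix
\begin{equation*}
N_{kj} = \int_{(-\omega,\omega)\setminus(-s,s)} w_2(x)\,p_{2k-1}(x)\,p_{2j-1}(x)\,dx,
\end{equation*}
so that the full-interval orthonormality (\ref{OP}) forces $M+N=I$; the same linear-independence argument on the complement shows $N$ is strictly positive definite as well, and hence every eigenvalue of $M=I-N$ lies strictly below $1$.
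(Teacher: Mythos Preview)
Your argument is correct. The paper does not actually prove this lemma: it simply attributes the result to Gaudin \cite{Ga61} and points to \cite[p.~410]{Fo10} for details. What you have supplied is an explicit, self-contained proof by reducing the rank-$m$ integral operator to the $m\times m$ Gram matrix $M_{kj}=\int_{-s}^s w_2\,p_{2k-1}p_{2j-1}$, diagonalising it by the spectral theorem to obtain $C\in O(m)$, and then using $M+N=I$ together with strict positive definiteness of both $M$ and $N$ (via linear independence of polynomials on sets of positive measure) to pin down $0<\nu_{2j-1}(s)<1$. This is exactly the finite-dimensional mechanism underlying Gaudin's construction, so your route is in spirit the same as the cited one; the value added is that you have made the argument fully explicit rather than deferring to the literature.
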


This functional analytic result is essentially due to Gaudin \cite{Ga61}; see also \cite[p.~410]{Fo10}.
The determinant of $\tilde Y$ can be simplified by applying 
the  elementary column operations of replacing column $k$
for $k=1,\dots, n$ by column $k$ minus $2 \int_s^\omega w_1(x)  {q}_{2k-1}(x) \, dx$ times column $n+1$.
It is immediate that the entries in rows $2,\dots,n+1$ and columns $1,\dots, n$  are then given by
\begin{multline}\label{at}
\tilde{a}_{2j+1,2k}  = 
 - \int_{-\omega}^\omega w_2(y) (1 - \xi \chi_{y \in (-s,s)})^2 q_{2j-1}(y) q_{2k-1}(y) \, dy\\*[2mm]
 = - \int_{-\omega}^\omega w_2(y)  q_{2j-1}(y) q_{2k-1}(y) \, dy + (2\xi-\xi^2) \int_{-s}^s w_2(y)  q_{2j-1}(y) q_{2k-1}(y) \, dy\\*[2mm]
 = - \delta_{jk} + (2\xi-\xi^2) \nu_{2j-1}\delta_{jk} =  - \delta_{jk} + (1-(\xi-1)^2) \nu_{2j-1}\delta_{jk},
\end{multline}
where we have used \eqref{OP} and \eqref{Km2} to obtain the last line.
The entries in row 1, column $1,\dots,n$, after first simplifying the expression for $a_{1,2k}$ in (\ref{2.18})
by noting that the integral over $y$ can be rewritten according to
\begin{multline*}
{1 \over 2}
 \int_{-\omega}^\omega dy \, w_1(y)  (1 - \xi \chi_{y \in (-s,s)}) {\rm sgn} \, (y-x) p_{2k-1}(y) \\
 = \xi \chi_{x \in (-s,s)} \int_s^\omega w_1(t) q_{2k-1}(t) \, dt +
 (1 - \xi \chi_{x \in (-s,s)}) \int_x^\omega w_1(t) q_{2k-1}(t) \, dt,
 \end{multline*}
 now read
 \begin{multline}\label{at2}
 \tilde{a}_{1,2k}  =  \xi (1 - \xi) \int_s^\omega w_1(t) q_{2k-1}(t) \, dt \; \int_{-s}^s w_1(x) \, dx \;+\;
\int_{-\omega}^\omega w_1(x) (1 - \xi \chi_{x \in (-s,s)})^2   \, dx \\*[2mm]
\times   \int_x^\omega w_1(t) q_{2k-1}(t) \, dt \; - \;
    \int_s^\omega w_1(t) q_{2k-1}(t) \, dt \; \int_{-\omega}^\omega w_1(x) (1 - \xi \chi_{x \in (-s,s)})   \, dx\\*[2mm]
 = \int_{-\omega}^\omega w_1(x) \,dx \int_x^\omega w_1(t)q_{2k-1}(t)\,dt - (1-(\xi-1)^2) \int_{-s}^s w_1(x)\,dx \int_x^s w_1(t)q_{2k-1}(t)\,dt. 
  \end{multline}
  The entries in the final column are unchanged by this process, and thus still have entries $b_{2j-1}$ as specified in
  (\ref{2.18}), with $p_{2j-1}(x)$ replaced by $q_{2j-1}(x)$.
  
To summarize, we have shown with Eqs. \eqref{Pr}, \eqref{N}, \eqref{Yt}, \eqref{at} and \eqref{at2} that
\begin{multline*}
E_{2m+1,1}((-s,s);\xi;w_1) = \frac{1}{\theta} \det \tilde Y =
\begin{vmatrix}
c_1^T + c_2^T (1-(\xi-1)^2) & 1 + \xi \gamma \\*[2mm]
-I + (1-(\xi-1)^2) D & \xi c_3
\end{vmatrix}\\*[4mm]
= \det(I-(1-(\xi-1)^2 D) \;+\; \xi \begin{vmatrix}
c_1^T + c_2^T (1-(\xi-1)^2) & \gamma \\*[2mm]
-I + (1-(\xi-1)^2) D & c_3
\end{vmatrix}
\end{multline*}
with $D = \diag(\nu_1(s),\nu_3(s),\ldots,\nu_{2m-1}(s))$, $\gamma$ a scalar and $c_1, c_2, c_3$ some column vectors with $m$ entries that depend on $s$ but not on $\xi$. The
structure of the last formula is
\begin{subequations}\label{eq:structure}
\begin{equation}
E_{2m+1,1}((-s,s);\xi;w_1) = E(1-(\xi-1)^2) + \xi F(1-(\xi-1)^2),
\end{equation}
where $F(\xi)$ is a polynomial and
\begin{equation}
E(\xi) = \prod_{j=1}^m (1- \xi \nu_{2j-1}(s)).
\end{equation}
\end{subequations}
Now, (\ref{eq:structure}) is immediately amenable to the following simple lemma, which follows from direct computation for the
monomial basis $\{(1-\xi)^j\}_{j=0,1,2,\ldots}$.

\begin{lemma} Let $G(\xi)$ be a polynomial. Then, for $k=0,1,2,\ldots$,
\begin{align*}
\bigg ( {1 \over (2k)!} {\partial^{2k} \over \partial \xi^{2k} }-
{1 \over (2k+1)!} {\partial^{2k+1} \over \partial \xi^{2k+1} } \bigg )
G(1-(\xi-1)^2) \Big |_{\xi = 1} &=  {(-1)^k \over k!} {\partial^{k} \over \partial \xi^{k} }  G(\xi) \Big |_{\xi = 1},\\*[4mm]
\bigg ( {1 \over (2k)!} {\partial^{2k} \over \partial \xi^{2k} }-
{1 \over (2k+1)!} {\partial^{2k+1} \over \partial \xi^{2k+1} } \bigg )
\xi G(1-(\xi-1)^2) \Big |_{\xi = 1} &= 0.
\end{align*}
\end{lemma}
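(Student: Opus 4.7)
The plan is to exploit linearity of both identities in $G$ and reduce to the basis $\{(1-\xi)^j\}_{j=0,1,2,\ldots}$ of the polynomial ring, as suggested by the hint. This basis is particularly convenient because the composition $\xi\mapsto 1-(\xi-1)^2$ collapses to a pure power of $(\xi-1)$ on it.

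Concretely, I would first compute that, for $G(\xi)=(1-\xi)^j$,
\[
G(1-(\xi-1)^2) \;=\; \bigl(1-(1-(\xi-1)^2)\bigr)^j \;=\; (\xi-1)^{2j},
\]
so only an even power of $(\xi-1)$ appears. For the companion expression entering the second identity, writing $\xi = 1+(\xi-1)$ yields
\[
\xi\,G(1-(\xi-1)^2) \;=\; (\xi-1)^{2j}+(\xi-1)^{2j+1},
\]
i.e.\ the two adjacent powers $(\xi-1)^{2j}$ and $(\xi-1)^{2j+1}$ appear with equal coefficients.

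Next I would use the fact that $\frac{1}{r!}\partial_\xi^r|_{\xi=1}$ is exactly the linear functional extracting the coefficient of $(\xi-1)^r$ in the Taylor expansion about $\xi=1$. Hence the combination $\frac{1}{(2k)!}\partial_\xi^{2k}-\frac{1}{(2k+1)!}\partial_\xi^{2k+1}$, evaluated at $\xi=1$, sends $(\xi-1)^{2j}$ to $\delta_{j,k}$ and sends $(\xi-1)^{2j}+(\xi-1)^{2j+1}$ to $\delta_{j,k}-\delta_{j,k}=0$, immediately giving the second identity on the basis. For the first identity, a one-line computation shows $\frac{(-1)^k}{k!}\partial_\xi^k(1-\xi)^j|_{\xi=1}=\delta_{j,k}$ (all contributions with $j\neq k$ vanish either because the $k$-th derivative annihilates the degree-$j$ polynomial, or because $(1-\xi)^{j-k}$ vanishes at $\xi=1$), matching the LHS on each basis element.

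I do not anticipate any genuine obstacle: once the substitution is unwound on the monomial basis $\{(1-\xi)^j\}$, the whole statement reduces to reading off two Taylor coefficients and observing a sign cancellation. The only point worth a sentence of justification is that both sides of each identity are linear in $G$, so checking on a basis of the polynomial ring suffices; this is immediate from the linearity of differentiation and of evaluation.
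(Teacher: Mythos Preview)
Your proposal is correct and follows exactly the approach indicated in the paper, namely reducing by linearity to the basis $\{(1-\xi)^j\}_{j\ge 0}$ and then reading off Taylor coefficients at $\xi=1$. The paper gives no further details beyond this hint, and your computations fill them in accurately.
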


Application of this lemma to (\ref{eq:structure}) gives
\[
\bigg ( {1 \over (2k)!} {\partial^{2k} \over \partial \xi^{2k} }-
{1 \over (2k+1)!} {\partial^{2k+1} \over \partial \xi^{2k+1} } \bigg ) E_{2m+1,1}((-s,s);\xi;w_1) = {(-1)^k \over k!} {\partial^{k} \over \partial \xi^{k} } E(\xi),
\]
which finally proves (\ref{B1}) by the well known and readily established fact (see, e.g.,~\cite[Exercises 9.6 Q.3]{Fo10}) that
$$
 E(\xi) = E_{m,2}((0,s^2); \xi;x^{1/2} w_2(x^{1/2}) \chi_{x > 0}).
 $$
 
 \section{Circular ensembles}\label{section:circular}
 It was remarked in the paragraph including (\ref{UO}) that applying a stereographic projection to the eigenvalues in the
 appropriate Cauchy case of (\ref{1.17}) gives (\ref{CO}). This transformation induces a natural definition of the decimated ensembles ${\rm even} \, |{\rm COE}_n|$ and ${\rm odd} \, |{\rm COE}_n|$. Now, the analogue of Theorem~\ref{thm:main1} allows us to characterize not only the ensemble $\even\,|{\rm COE}_n|$ but also $\odd\,|{\rm COE}_n|$: 
 
\begin{thm}\label{thm:CE} Let $\mu$ be defined as in \eqref{eqn:dim} 
 and, with ${\rm sgn} \, (x) = +$ for $x>0$ and ${\rm sgn} \, (x) = -$ for $x<0$, define $\nu = {\rm sgn} \, (1/2 - \mu)$.
Then, the circular ensembles satisfy the inter-relations
\begin{align}
{\rm even} \, |{\rm COE}_n|  &\,\overset{\rm d}{=}\,  O^\nu(n+1),\label{D2}\\
{\rm odd } \, |{\rm COE}_n|  &\,\overset{\rm d}{=}\,  O^{-\nu}(n+1),\label{D3}\\
|{\rm CUE}_n| &\,\overset{\rm d}{=}\,  {\rm even} \, |{\rm COE}_n|   \, \cup \,  {\rm odd } \, |{\rm COE}_n|,\label{CO1}
\end{align}
where, in the last equation, both ensembles on the right are to be chosen independently.
\end{thm}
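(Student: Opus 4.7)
The overall plan is to specialise Theorem~\ref{thm:main1} to the Cauchy case of Table~\ref{tab:admissible1} with parameter $a=0$ and then push everything through the stereographic projection $x=\tan(\theta/2)$ introduced in Section~\ref{section:revision}. For this choice, $w_1(x)=(1+x^2)^{-(n+1)/2}$ is precisely the weight (cf.~\eqref{W1}--\eqref{1A}) that carries $|\OE_n(w_1)|$ onto $|\COE_n|$, and $w_2(x)=(1+x^2)^{-n}$ is precisely the weight for which \eqref{UO} identifies the relevant chUE ensembles with $O^\pm(n+1)$. Because $x\mapsto 2\arctan x$ is an odd, strictly increasing bijection, it commutes with taking absolute values, with sorting, and therefore with the decimation operations $\even$ and $\odd$.

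With this setup, \eqref{D2} is immediate: Theorem~\ref{thm:main1} gives
\[
\even|\OE_n((1+x^2)^{-(n+1)/2})| \;\overset{\rm d}{=}\; \chUE_m\!\left(x^{2\mu}(1+x^2)^{-n}\right),
\]
and combining the stereographic projection with~\eqref{UO} turns this into $\even|\COE_n|\overset{\rm d}{=}O^\nu(n+1)$, since $\nu=+$ exactly when $\mu=0$ and $\nu=-$ when $\mu=1$.

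For \eqref{D3} I would exploit the rotation invariance of the COE. The rotation $\theta\mapsto\theta+\pi\pmod{2\pi}$ preserves the joint distribution of $\COE_n$ and replaces each absolute eigen-angle $|\theta_j|$ by $\pi-|\theta_j|$, which \emph{reverses} the order of the sorted absolute values. When $n$ is even this reversal \emph{exchanges} the $\even$ and $\odd$ decimations (up to the reflection $t\mapsto \pi-t$), so $\odd|\COE_n|\overset{\rm d}{=}\pi-\even|\COE_n|$; together with \eqref{D2} and the fact that for $n+1$ odd the involution $A\mapsto -A$ maps $O^+(n+1)$ onto $O^-(n+1)$ while acting on the $(0,\pi)$-eigen-angles as $\theta\mapsto \pi-\theta$, this yields \eqref{D3}. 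When $n$ is odd the same rotation only produces the self-symmetry $\odd|\COE_n|\overset{\rm d}{=}\pi-\odd|\COE_n|$, so a separate argument is required: I would apply Corollary~\ref{cor:even_out} in the Cauchy case with $a=0$, pass the resulting explicit marginal density of $\odd|\OE_n(w_1)|$ through the stereographic projection, and match it with the Weyl eigen-angle density of $O^+(n+1)=O^{-\nu}(n+1)$.

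The superposition \eqref{CO1} then follows formally: by \eqref{D2}, \eqref{D3} taken as independent copies, and the known superposition~\eqref{CO},
\[
\even|\COE_n| \cup \odd|\COE_n| \;\overset{\rm d}{=}\; O^\nu(n+1) \cup O^{-\nu}(n+1) \;\overset{\rm d}{=}\; |\CUE_n|.
\]
The main obstacle is the odd-$n$ case of \eqref{D3}: the expression for $q_{\text{odd}}$ at the end of Section~\ref{section:evensingularvalues} is a product of two determinants that differ only in their last rows and is therefore not a perfect square, so its identification after the stereographic change of variables with the Weyl density for $O^+(n+1)$ requires careful column manipulations that exploit the specific form of the Cauchy weight and of the primitive $\theta_1$ listed in Table~\ref{tab:theta_function}.
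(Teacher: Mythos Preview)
Your treatment of \eqref{D2} and \eqref{CO1} matches the paper: specialise Theorem~\ref{thm:main1} to the Cauchy weight with $a=0$, push through the stereographic projection using \eqref{UO}, and then combine the two decimation identities with the known superposition \eqref{CO}.

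The difference lies entirely in \eqref{D3}. The paper does \emph{not} split into even and odd $n$; instead it runs Theorem~\ref{thm:main1} once more with the Cauchy weight \eqref{W1Cauchy-with-a} carrying the extra parameter $a>-1$, transforms stereographically to the circular Jacobi ensemble, and then sends $a\to-1^+$. As in the Selberg-integral theory (\cite[Prop.~4.1.3]{Fo10}), this limit freezes one eigen-angle at $\theta=\pi$: on the COE side it decouples one eigenvalue and turns the $\even$ decimation of $n$ points into the $\odd$ decimation of the remaining $n-1$, while on the $\chUE$/orthogonal-group side the same mechanism converts $O^{\pm}$ into $O^{\mp}$. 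This gives \eqref{D3} uniformly in the parity of $n$ and completely bypasses the marginal density $q_{\text{odd}}$.

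Your rotation argument $\theta\mapsto\theta+\pi$ for even $n$ is correct and pleasantly elementary, and the matching $A\mapsto -A$ on $O(n+1)$ with $n+1$ odd is exactly the right counterpart. The weak point is the odd-$n$ case: you correctly identify that $q_{\text{odd}}$ is not a perfect square, and proposing to massage it after stereographic projection into the Weyl density of $O^+(n+1)$ is not a priori hopeless, but it is precisely the computation the paper's limiting argument is designed to avoid. So your plan is sound where it is complete, and genuinely different in the even-$n$ half of \eqref{D3}; the odd-$n$ half, however, is still a gap, and the paper's $a\to-1^+$ device is the missing idea that closes it cleanly.
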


\begin{rem}
The last inter-relation should be contrasted with the trivial relation
\[
|{\rm COE}_n| \,\overset{\rm d}{=}\,   {\rm even} \, |{\rm COE}_n|   \, \cup \,  {\rm odd } \, |{\rm COE}_n|
\]
when both occurrences of  ${\rm COE}_n$ on the right would represent one and the \emph{same} ensemble instead of being independent.
\end{rem}
\begin{proof} 
The application of Theorem~\ref{thm:main1} to the Cauchy ensembles with weight \eqref{W1} 
and a subsequent transformation to the circular ensembles by a stereographic projection of the eigenvalues
transforms, by recalling \eqref{UO}, the inter-relation (\ref{eqn:even_aGUE}) into the first assertion \eqref{D2}.

Next, we repeat these steps with the Cauchy weight 
\begin{equation}\label{W1Cauchy-with-a}
w_1(x) = \frac{1}{(1+x^2)^{(n-1+a)/2+1}}\qquad (a>-1),
\end{equation}
which transforms by the stereographic projection into the circular Jacobi ensemble with parameter $a$ \cite[\S3.9]{Fo10}.
Though the resulting PDF becomes singular in the limit $a \to -1^+$, we know from working in the theory of the Selberg integral
  (see e.g.~\cite[Prop. 4.1.3]{Fo10}) that the limit effectively reduces the number of eigenvalues from $n$ to $n-1$, by the mechanism
  of freezing one eigenvalue, taken to be at $\theta = \pi$. This decouples but
  otherwise leaves the joint distribution of the remaining eigenvalues unchanged. Noting that the freezing of an eigenvalue at $\theta = \pi$ also has the consequence of replacing the even operation by the odd operation,
  and after applying analogous reasoning on the RHS of (\ref{eqn:even_aGUE}), we deduce the second assertion \eqref{D3}.

Finally, by recalling (\ref{CO}), the last assertion \eqref{CO1} follows from (\ref{D2}) and (\ref{D3}). 
Alternatively, \eqref{CO1} could also have been deduced from (\ref{eqn:super})  by the choice of the appropriate Cauchy weight, and appropriate interpretation of
the weight in the second term as just discussed.
\end{proof}

\begin{rem}
Interestingly, the pathway to (\ref{D3}) via the limit $a\to-1^+$ in (\ref{eqn:even_aGUE}) with weight (\ref{W1Cauchy-with-a}) can also be followed in the appropriate Laguerre and Jacobi cases of (\ref{24c}) to
deduce (\ref{24a}).
\end{rem}

Analogous to the deduction of Theorem \ref{thm:gap} from  Theorem~\ref{thm:main1}, as a corollary of Theorem~\ref{thm:CE}, we 
get:

\begin{thm}\label{thm:D4} With $\mu$ as in \eqref{eqn:dim} and $\nu = \sign(1/2-\mu)$, we have the gap probability inter-relations\\*[-3mm]
  \begin{align*}
  E_{n,1}(2k-1+\mu;(-\theta,\theta);{\rm COE}_n) + E_{n,1}(2k + \mu ;(-\theta,\theta);{\rm COE}_n)  &=
  E_{m,2}(k;(0,\theta); O^{+\nu}(n+1)),\\*[2mm]
  E_{n,1}(2k-\mu;(-\theta,\theta);{\rm COE}_n) + E_{n,1}(2k + 1 -\mu ;(-\theta,\theta);{\rm COE}_n)  &=
  E_{\hat m,2}(k;(0,\theta); O^{-\nu}(n+1)).
  \end{align*}
 \end{thm}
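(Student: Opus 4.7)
The plan is to mirror the derivation of Theorem~\ref{thm:gap} from Theorem~\ref{thm:main1}, now using Theorem~\ref{thm:CE} as the distributional input. The remaining task is purely combinatorial: re-express the gap probabilities of the two decimated circular ensembles as two-term sums of gap probabilities of the parent $\mathrm{COE}_n$.

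Let $0\le\sigma_1\le\cdots\le\sigma_n$ denote the increasingly ordered values of $|\mathrm{COE}_n|$. Recall from Section~\ref{section:jointdensity} that the even-location values (2nd largest, 4th largest, $\ldots$) occupy, in this increasing order, the positions $\{1,3,\dots,2m-1\}$ when $\mu=0$ and $\{2,4,\dots,2m\}$ when $\mu=1$. Consequently, the number of even-location values among the $j$ smallest equals $\lceil j/2\rceil$ respectively $\lfloor j/2\rfloor$, and in either case equals a given $k$ exactly for $j\in\{2k-1+\mu,\,2k+\mu\}$. Since these two events are mutually exclusive, and since (by the definition of $|\cdot|$) the $|\mathrm{COE}_n|$-values in $(0,\theta)$ correspond bijectively to the $\mathrm{COE}_n$ eigen-angles in $(-\theta,\theta)$, this bookkeeping yields
\[
E_{n,1}(2k-1+\mu;(-\theta,\theta);\mathrm{COE}_n)+E_{n,1}(2k+\mu;(-\theta,\theta);\mathrm{COE}_n) = P\bigl(\#(\even|\mathrm{COE}_n|\cap(0,\theta))=k\bigr),
\]
and the right-hand side equals $E_{m,2}(k;(0,\theta);O^{+\nu}(n+1))$ by \eqref{D2}. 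This is the first assertion.

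The second assertion is obtained by repeating the same argument with ``even'' replaced by ``odd'' throughout. The odd-location positions in the increasing order are $\{2,4,\dots,2m\}$ when $\mu=0$ and $\{1,3,\dots,2m+1\}$ when $\mu=1$, so the relevant count among the $j$ smallest is now $\lfloor j/2\rfloor$ respectively $\lceil j/2\rceil$, yielding the window $j\in\{2k-\mu,\,2k+1-\mu\}$; invoking \eqref{D3} in place of \eqref{D2} produces the claim, with the sign on the orthogonal ensemble flipping from $\nu$ to $-\nu$ and the cardinality adjusting from $m$ to $\hat m$. The only step requiring genuine care is the parity bookkeeping as $\mu$ ranges over $\{0,1\}$ --- both the correct two-term window on the left and the correct pairing of sign and cardinality ($m$ or $\hat m$) on the right depend on it --- but once the analogy with the proof of Theorem~\ref{thm:gap} is in place everything else is automatic.
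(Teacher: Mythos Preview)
Your proposal is correct and follows exactly the approach the paper indicates: the paper states only that the result is obtained ``analogous to the deduction of Theorem~\ref{thm:gap} from Theorem~\ref{thm:main1}, as a corollary of Theorem~\ref{thm:CE}'', and your argument is precisely that analogy carried out in detail, including the same parity bookkeeping on the decimated positions and the same translation from $(0,\theta)$ for $|\mathrm{COE}_n|$ to $(-\theta,\theta)$ for $\mathrm{COE}_n$.
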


In the case $n$ even these inter-relations has previously been noted in \cite[Eq.~(3.25)]{EvenSymm}, where it is remarked that it allows the gap probabilities of ${\rm COE}_n$ to be expressed as simple linear combinations of the gap probabilities of $O^{\pm}(n+1)$.
One advantage of such expressions is that the ensembles $O^\pm(n+1)$ are
determinantal point processes (see e.g.~\cite[Ch.~5]{Fo10}), allowing the corresponding gap probabilities to be expressed as Fredholm determinants, which enjoy exponentially
fast numerical approximation, and thus allowing for their efficient high precision computation \cite{Bo10}.
Another advantage is that the gap probabilities for determinantal point processes can be shown to obey a local
limit theorem in an appropriate asymptotic regime. The inter-relations then allow for the deduction of such
asymptotic behaviour for the sum of neighbouring gap probabilities in COE${}_n$, for which no direct
methods are known \cite{FL14}.

The gap probability inter-relation implied by (\ref{CO1}) is exactly (\ref{8.31p}), even though the matrix ensemble inter-relation (\ref{21})  used in its previous derivation
is distinct from (\ref{CO1}). This is a concrete example of the general fact that the family of gap-probability inter-relations specified by
 Theorem \ref{thm:gap} or by Theorem~\ref{thm:D4} do not contain enough information to determine a particular matrix ensemble inter-relation, even though they
 are suggestive.

\section*{Acknowledgements}
The work of FB was supported by the DFG-Collaborative Research Center, TRR
109, ``Discretization in Geometry and Dynamics.''
The work of PJF was supported by the Australian Research Council through the grant DP140102613.


\providecommand{\bysame}{\leavevmode\hbox to3em{\hrulefill}\thinspace}
\providecommand{\MR}{\relax\ifhmode\unskip\space\fi MR }
\providecommand{\MRhref}[2]{%
  \href{http://www.ams.org/mathscinet-getitem?mr=#1}{#2}
}
\providecommand{\href}[2]{#2}


\end{document}